\begin{document}

\numberwithin{equation}{section}
\theoremstyle{plain}
\newtheorem{theorem}{Theorem}[section]
\newtheorem{corollary}[theorem]{Corollary}
\newtheorem{lemma}[theorem]{Lemma}

\theoremstyle{definition}
\newtheorem{definition}{Definition}[section]

\theoremstyle{remark}
\newtheorem{innerexample}{Example}
\newenvironment{example}[1]
  {\renewcommand\theinnerexample{#1}\innerexample}
  {\endinnerexample}
\newtheorem{remark}{Remark}[section]


\newcommand{\up}{\overline{P}}
\newcommand{\lp}{\underline{P}}
\newcommand{\lps}{\underline{P}_{\bullet}}
\newcommand{\ups}{\overline{P}_{\bullet}}



\newcommand{\upb}{\overline{P}_{\mathfrak{B}}}
\newcommand{\lpb}{\underline{P}_{\mathfrak{B}}}
\newcommand{\upd}{\overline{P}_{\mathfrak{D}}}
\newcommand{\lpd}{\underline{P}_{\mathfrak{D}}}
\newcommand{\upg}{\overline{P}_{\mathfrak{G}}}
\newcommand{\lpg}{\underline{P}_{\mathfrak{G}}}
\newcommand{\borel}{\mathscr{B}\left(\Omega\right)}

%

\newcommand{\xr}[1]{\normalmarginpar\marginnote{\scriptsize \bf \color{red}[XL: #1]}}
\newcommand{\red}[1]{{\color{red}{#1}}}
\newcommand{\xc}[1]{\reversemarginpar\marginnote{\scriptsize \bf \color{cyan}[XL: #1]}}
\newcommand{\cyan}[1]{{\color{cyan}{#1}}}

\newcommand{\rr}[1]{\normalmarginpar\marginnote{\scriptsize \bf \color{red}[RG: #1]}}
\newcommand{\rc}[1]{\reversemarginpar\marginnote{\scriptsize \bf \color{cyan}[RG: #1]}}


\begin{frontmatter}
\title{Judicious Judgment Meets Unsettling Updating: \\ Dilation, Sure Loss, and Simpson's Paradox}
\runtitle{Dilation, Sure Loss, and Simpson's Paradox}
\thankstext{T1}{Supported in part by the John Templeton Foundation Grant 52366.}

\begin{aug}
  \author{\fnms{Ruobin} \snm{Gong}\ead[label=e1]{rgong@fas.harvard.edu}}
  \and
  \author{\fnms{Xiao-Li} \snm{Meng}\ead[label=e2]{meng@stat.harvard.edu}}
 
  \runauthor{Gong and Meng}
  \affiliation{Harvard University}
 
  \address[]{1 Oxford Street, Cambridge, MA, 02138 \printead{e1,e2}}
\end{aug}

\begin{abstract}
Statistical learning using imprecise probabilities is gaining more attention because it presents an alternative strategy for reducing irreplicable findings by freeing the user from the task of making up unwarranted high-resolution assumptions. However, model updating as a mathematical operation is inherently exact, hence updating imprecise models requires the user's judgment in choosing among competing updating rules. These rules often lead to incompatible inferences, and can exhibit unsettling phenomena like {\it dilation},  {\it contraction} and {\it sure loss}, which cannot occur with the Bayes rule and precise probabilities. We revisit a number of famous ``paradoxes", including the three prisoners/Monty Hall problem, revealing that a logical fallacy arises from a set of marginally plausible yet jointly incommensurable assumptions when updating the underlying imprecise model. We establish an equivalence between Simpson's paradox and an implicit adoption of a pair of aggregation rules that induce sure loss. We also explore behavioral discrepancies between the \textit{generalized Bayes rule}, \textit{Dempster's rule} and the \textit{Geometric rule} as alternative posterior updating rules for Choquet capacities of order 2. We show that both the generalized Bayes rule and Geometric rule are incapable of updating without prior information regardless of how strong the information in our data is, and that Dempster's rule and the Geometric rule can mathematically contradict each other with respect to dilation and contraction. Our findings show that unsettling updates reflect a collision between the rules' assumptions and the inexactness allowed by the model itself, highlighting the invaluable role of judicious judgment in handling low-resolution information, and the care we must take when applying learning rules to update imprecise probabilities.

\end{abstract}

\begin{keyword}[class=MSC]
\kwd[Primary ]{62A01}
\kwd[; secondary ]{62C86}
\end{keyword}

\begin{keyword}
\kwd{Belief functions}
\kwd{Choquet capacities}
\kwd{Contraction}
\kwd{Dempster's rule}
\kwd{Imprecise probability}
\kwd{Monty Hall problem}
\end{keyword}

\end{frontmatter}


\section{There is no free lunch...}\label{sec:intro}

Statistical learning is a process through which models perform updates in light of new information, according to a pre-specified set of operation rules. As new observations arrive, a good statistical model revises and adapts its existing uncertainty quantification to what has just been learned. If a model \emph{a priori} judges the probability of an event $A$ to be $P(A)$, after learning event $B$ happened, it may update the posterior probability according to the Bayes rule: 
\begin{equation*}
		P(A\mid B) = P(A) \cdot \frac{P(B\mid A)}{P(B)}.
\end{equation*}

Exactly one of three things will happen: $P(A\mid B) > P(A)$, $P(A\mid B) < P(A)$, or $P(A\mid B) = P(A)$. Moreover, $P\left(A\mid B\right)>P\left(A\right)$ if and only if $P\left(A\mid B^{c}\right)<P\left(A\right)$, that is, if $B$ expresses positive support for $A$, its complement must express negative support. The comparison of prior and posterior probabilities of $A$ encapsulates its \emph{association} with the observed evidence $B$, a fundamental characterization of the contribution made by a piece of statistical information.

Nevertheless, there exists modeling situations in which associations do not comply with our well-founded intuition. We sketch a series of such examples, well-known from probability classrooms to real life statistical inference, which will serve as the basis of our analysis throughout the paper. Many of them, known as paradoxes, bear multiple solutions that have long been the center of dispute and explication in the literature. What makes all of them thought-provoking is the apparent change from prior to posterior judgments of an event of interest that most will find counterintuitive. That, as we will see, is a consequence of the ambiguity in the probabilistic specification of the model itself, {ambiguity that cannot be meaningfully resolved by any automated rule.}

\subsection{Statistical paradox or imprecise probability?}\label{subsec:intro}

\begin{example}{1}[\textit{Treatment efficacy before and after randomization;} Section \ref{subsubsec:gen_bayes}]\label{example:coin}

Patients Oreta and Tang are participating in a clinical trial, in which one of them will receive treatment I, and the other treatment II, with equal probability. Let $A$ denote the event that Oreta will improve more from this trial than Tang (assuming no ties), and let $B$ denote the event that Tang is assigned to treatment I. Before the treatment is assigned, we clearly have $P\left(A\right)=1/2$ because the situation is fully symmetric ({in the absence of any other information}). However, after the assignment is observed, we seem to have no good idea of the value of either $P\left(A\mid B\right)$ or $P\left(A\mid B^{c}\right)$, other than they are both bounded within $[0, 1]$. 
\end{example}


Example \ref{example:coin} showcases a severe form of ``confusion'' expressed by the model as the prior probability updates to posterior probability in light of {\it any} new information. The precise prior judgments $P\left(A\right)=1/2$ and $P\left(A^c\right)=1/2$ are both bound to suffer a loss of precision by the sheer act of conditioning on {any event in} $\mathcal{B} = \{B, B^c\}$. A central topic of this paper is the {\it dilation} phenomenon, revealed by \cite{good1974little} and investigated in depth by \cite{seidenfeld1993dilation,herron1994extent,herron1997divisive}. A formal definition is given in Section \ref{subsec:dilation}.

\begin{example}{2}[\textit{The boxer, the wrestler, and the coin flip \citep{gelman2006boxer}}; Sections \ref{subsec:dilation} \& \ref{subsec:verdict}]\label{example:boxer}

The greatest boxer and the greatest wrestler are scheduled to fight. Who will defeat the other? Let $Y = 1$ if the boxer wins; $Y = 0$ if the wrestler wins.  Also, let $X = 1$ if a toss of a fair coin yields heads; $X = 0$ if tails. A witness at both the fighting match and the coin flip tells us that $X = Y$. Given this, what is the boxer's chance of winning, $P\left(Y=1\mid X=Y \right)$? 
\end{example}

\begin{example}{3}[\textit{Three prisoners \citep{diaconis1978mathematical, diaconis1983some}}; Sections \ref{subsec:sureloss} \& \ref{subsec:decision}]\label{example:prisoner}
	Three death row inmates $A$, $B$, and $C$ are told, on the night before their execution, that one of them has been chosen at random to receive parole, but it won't be announced until the next morning. Desperately hoping to learn immediately, prisoner $A$ says to the guard: ``Since at least one of $B$ and $C$ will be executed, you'll give away no information about my own chance by giving the name of just one of either $B$ or $C$ who is going to be executed.''  Convinced of this argument, the guard truthfully says, ``$B$ will be executed.'' Given this information, how should $A$ judge his living prospect, $P\left(A\text{ lives}\mid\text{guard says }B\right)$? 
\end{example}


\begin{example}{4}[\textit{Simpson's paradox \citep{simpson1951interpretation,blyth1972simpson};} Section \ref{sec:simpson}]\label{example:simpson}
We would like to evaluate the effectiveness of a novel treatment (experimental) compared to its standard counterpart (control). Let $Z = 1$ denote assignment of the experimental treatment, $0$ the control treatment, and let $Y = 1$ denote the event of a recovery, $0$ otherwise. Let $U \in \{1, 2,...,K\}$ be a covariate of the patients, a $K$-level categorical indicator variable. One could imagine $K$ to be very large, to the extent that the univariate $U$ creates sufficiently individualized strata among the patient population. 

Suppose we learn from reliable clinical studies that the experimental treatment works better than the control for all  $K$ subtypes of patients. That is,
\begin{equation}
p_{k} \equiv P\left(Y=1\mid Z=1,U=k\right) > q_{k} \equiv P\left(Y=1\mid Z=0,U=k\right), \quad k = 1,...,K.  \label{eq:simpson-conditional}
\end{equation}
Nevertheless, field studies consisting of feedback reports from clinics and hospitals seem to suggest otherwise; that on an overall basis, the control treatment cures more patients than the experimental treatment. That is, 
\begin{equation}
\bar{p}_{\text{obs}} \equiv P_{\text{obs}}\left(Y=1\mid Z=1\right) < \bar{q}_{\text{obs}} \equiv P_{\text{obs}}\left(Y=1\mid Z=0\right). \label{eq:simpson-marginal}
\end{equation}
How do we resolve the apparent conflict between the conditional inference in (\ref{eq:simpson-conditional}) and the marginal inference in (\ref{eq:simpson-marginal})?
\end{example}

 


The above examples will be examined in detail in Sections~\ref{sec:updates} through \ref{sec:results}. All of them, despite being disguised with cunning descriptions, share the characteristic of being an {\it imprecise model}. Their narratives imply the existence of a joint distribution, yet only a subset of marginal information is precisely specified.

For instance, in Example~\ref{example:coin}, while the treatment assignment ($B$) is known to be fair prior to randomization, the improvement event $A$ is not measurable with respect to the $B$ margin, effectively posing a Fr\'{e}chet class of joint distributions on the $\left\{A,B\right\}$ space. The only statements we can make about $P(A \mid Z)$ are the trivial bounds $0 \le P\left(A\mid Z\right)\le 1$,
whether $Z=B$ or $Z=B^c$, leading to the dilation phenomenon.  For Example~\ref{example:boxer}, the coin margin $X$ is fully known {\it a priori}, but the relationship between the fighters $Y$ and the coin $X$, crucial for quantifying the event $\{X=Y\}$, is unspecified. In Example~\ref{example:prisoner}, the guard's tendency to report $B$ over $C$ is unspecified in the case that $A$ was granted parole, yet $A$'s survival probability depends critically on this reporting tendency. In all of these examples, the water gets muddied due to an unspecified but necessary piece of relational knowledge, which in turn imposes on the modeler a choice among a multiplicity of updating rules, each supplying a distinct set of assumptions to complement this ambiguity.

\subsection{What do we try to accomplish in this paper?}

Unsettling phenomena to be discussed in this paper reflect unusual ways through which more information can seemingly ``harm'' our existing knowledge of the state of matters. These phenomena are not foreign to statisticians, but are seen as anomalies and paradoxes, far from everyday model building. In fact, whenever there is a fully and precisely specified probability model, none of these phenomena would occur. Wouldn't we all be safer then by staying away from any imprecise model? Quite the contrary, we argue. Imprecise models are unavoidable even in basic statistical modeling, and sometimes they are disguised as precise models only to trick us into blindness. Simpson's paradox, re-examined in Section~\ref{sec:simpson}, is one of such cases. Without acknowledging the imprecise nature of modeling, one is ill-suited to make judicious choices among the updating rules and treatments of evidence.  


We aim to investigate these perceived anomalies as they occur during the updating of imprecise models, and their implications on the choice of updating rules. Imprecise models in statistical modeling are ubiquitous and can be easily induced from precise models through the introduction of external variables. When model imprecision is present, a choice among updating rules is a necessity, and it reflects the modeler's judgment on how statistical evidence at hand should be used. 
With the recent surge of interest in imprecise probability-based and related
statistical frameworks including generalized Fiducial inference \citep{hannig2009generalized}, confidence distribution \citep{hannig2012note, schweder2016confidence} and inferential models \citep{martin2015inferential}, we are compelled to bring attention to the non-negligible choice of combining and conditioning rules for statistical evidence.

The remainder of this paper starts with an introduction to some formal notations of imprecise probabilities in Section \ref{subsec:def}, particularly of Choquet capacities of order 2 as well as belief functions, a versatile special case which can also be formulated as a precise model for imprecise states, that is, set-valued random variables.  Three main updating rules are introduced in Section \ref{subsec:rules}, all of which are applicable to Choquet capacities of order 2. Section \ref{sec:updates} defines dilation, contraction and sure loss as phenomena that happen during imprecise model updating, and Section \ref{sec:simpson} shows how Simpson's paradox is a consequence of an ill-chosen updating rule by establishing its equivalence to a sure loss in aggregation. It also shows how imprecise models can be easily induced from precise ones. Section~\ref{sec:results} compares and contrasts the behavior of the updating rules, especially as they exhibit dilation and sure loss, and illustrates them with an additional example. When do the updating rules differ, and how? We believe these questions will shed light on the means through which information could contribute to imprecise statistical models, a topic we will discuss in Section~\ref{sec:discussion}, among others.



\section{Imprecise probabilities and their updating rules}

This section introduces some formal concepts and notation governing imprecise probability needed within the scope of this paper. Readers who are familiar with such notion may skip to Section \ref{sec:updates}.

\subsection{Coherent lower and upper probabilities}\label{subsec:def}

\begin{definition}[\textit{Coherent lower and upper probabilities}]
Let $\Omega$ be a separable and completely metrizable space
and $\mathscr{B}(\Omega)$ be its Borel $\sigma$-algebra. The {\it lower and upper probabilities} of a set of probability measures $\Pi$ on $\Omega$ are set functions
\begin{equation*}
\lp \left(A\right) = \inf_{P\in\Pi}P\left(A\right) \ {\rm and} \
\up \left(A\right)  = \sup_{P\in\Pi}P\left(A\right), \quad {\rm for\ all}\  A \in \mathscr{B}(\Omega).
\end{equation*}
Respectively, $\lp$ and $\up$ are said to be {\it coherent} lower and upper probabilities if $\Pi$ is closed and convex \citep{walley2000towards}.
\end{definition}
Note that $\lp$ and $\up$ are {\it conjugate} in the sense that $\up\left(A\right) = 1 - \lp\left(A^c\right)$; thus one is sufficient for characterizing the other. We may refer to either $\lp$ or $\up$ individually with the understanding of their one-to-one relationship. Next we introduce Choquet capacities, an important class of imprecise probabilities widely used in robust statistics \citep{huber1973minimax,wasserman1990prior}.

\begin{definition}[\textit{Choquet capacities of order $k$}]
Suppose a coherent lower probability $\lp$ is such that $\{P; P \ge \lp \}$ is weakly compact. $\lp$ is a {\it Choquet capacity of order $k$}, or {\it $k$-monotone} capacity, if for every {Borel-measurable} collection of {$\{A, A_1,...,A_{k}\}$} 
such that $A_{i} \subset A$ for all $i = 1,...,k$, we have
 \begin{equation}\label{eq:choquet}
 	\lp \left(A\right)\ge\sum_{\emptyset\neq I\subset\left\{ 1,...,k\right\} }\left(-1\right)^{\left| I\right|-1}\lp\left(\cap_{i\in I}A_{i}\right)
 \end{equation}	
where $|S|$ denotes the number of elements in the set $S$. Its conjugate capacity function $\up$ is a called a $k$-{\it alternating} capacity, because it satisfies for every {Borel-measurable} collection of $\{A, A_1,...,A_{k}\}$ such that $A \subset A_{i}$ for all $i = 1,...,k$,
  \begin{equation}
 	\up \left(A\right) \le\sum_{\emptyset\neq I\subset\left\{ 1,...,k\right\} }\left(-1\right)^{\left| I\right|-1}\up\left(\cup_{i\in I}A_{i}\right).
\end{equation}
\end{definition}

If a Choquet capacity is $(k+1)$-monotone, it is  $k$-monotone as well: the smaller the $k$, the broader the class. Choquet capacities of order 2 are a special case of coherent lower probability. They satisfy $ \lp \left(A\cup B\right)\ge  \lp  \left(A\right)+ \lp  \left(B\right)-\lp\left(A\cap B\right)$ for all $A,B \in \mathscr{B}\left(\Omega\right)$.
A most special case of Choquet capacities consists of belief functions  \citep{shafer1979allocations}.

\begin{definition}[\textit{Belief function}]
	$\lp$ is called a {\it belief function} if it is a Choquet capacity of order $\infty$, i.e., if (\ref{eq:choquet}) holds for every $k$. 
\end{definition}

It can be easily verified that precise probabilities are a special case of belief functions, and in turn Choquet capacities of any order, thus all of the above are more general constructs than precise probability functions. That being said, they constitute a small class of imprecise probabilities imaginable. Belief functions in particular have their own specializations and limitations when it comes to characterizing imprecise knowledge in probability specifications.  \cite{pearl1990reasoning} noted that belief functions are often incapable of characterizing imprecise probabilities expressed in conditional forms, a category in which Examples~\ref{example:coin} and~\ref{example:simpson} fall, thus neither models are belief function-representable. However, unique to belief function is its intuitive interpretation as a random set object that realizes itself as subsets of $\Omega$. 

\begin{definition}[\textit{Mass function of a belief function}]
If $\lp$ is a belief function, its associated {\it mass function} is the non-negative set function $m:\mathscr{P}\left(\Omega\right)\to\left[0,1\right]$ such that
\begin{equation}\label{eq:mobius}
m\left(A\right)=\sum_{B\subseteq A}\left(-1\right)^{{|A-B|}}\lp \left(B\right),  \quad {\rm for\ all}\ {A \in \borel}
\end{equation}
where $A-B=A\cap B^c$, and the notation $\sum_{B\subseteq A}$ should be understood as taking the sum under the additional constraint that $B\in \borel$ (a convention for the rest of this article, whenever appropriate). Here $m$ satisfies (1) $m\left(\emptyset\right)=0$, (2) $\sum_{B\subseteq\Omega}m\left(B\right)=1$, and (3) $\lp \left(A\right)=\sum_{B\subseteq A}m\left(B\right)$ and is unique to $\lp$.
\end{definition}
Formula (\ref{eq:mobius}) is called the {\it M{\"o}bius transform} of $\lp$ \citep{yager2008classic}. A mass function $m$ induces a precise probability distribution on the subsets of $\Omega$, as the distribution of a random set $\mathcal{R}$.
In Section~\ref{sec:updates} we will invoke the mass function representation of belief functions in Examples~\ref{example:boxer}, \ref{example:prisoner}, and \ref{example:election} (to be discussed in Section~\ref{subsec:poll}).

\subsection{Updating rules for coherent lower {and upper} probabilities} \label{subsec:rules}


To update a set of probabilities $\Pi$ \textit{given} a set $B\in \borel$ is to replace the set function $\underline{P}$ with a version of the \textit{conditional set function} $\lps \left(\cdot \mid B\right)$. The definition of $\lps$ is precisely the job of the updating rule. We emphasize that to say an event is ``given'' does not necessarily mean it is ``observed''. In hypothetical contemplations we often employ conditional statements about all events in a partition, for example $\mathcal{B}=\{B, B^c\}$, even if logically we cannot observe $B$ and $B^c$ simultaneously. Therefore, the phrase ``given'' should be understood as imposing a mathematical constraint \textit{generated} by $B$. When $\Pi$ contains a single, precise statistical model, the Bayes rule entirely dictates how we use the information supplied by $B$. But when $\Pi$ is imprecise and does not possess {\it sharp} knowledge about $B$, i.e., $\lp \left(B\right) < \up \left(B\right)$ \citep{dempster1967upper}, the updating rule itself becomes an imprecise matter. As a consequence, there exists multiple reasonable ways to use the information in $B$, e.g, ``supported by $B$" and ``not contradicted by $B$" generate two different constraints. This raises both flexibility and confusion in defining the updating rules. Here we supply the formal definitions of three viable updating rules for coherent lower and upper probabilities: the {\it generalized Bayes} rule, {\it Dempster's} rule, and the {\it Geometric} rule. Important differences and relationships exist among these rules, as we shall present in Section~\ref{sec:results}.

\subsubsection{Generalized Bayes rule.}\label{subsubsec:gen_bayes}
Recall Example~\ref{example:coin}. Using the notation in \ref{subsec:def}, we rewrite the imprecise model in terms of its prior upper and lower probabilities of event $A$, which are precisely one half:
$\lp(A)=\up(A)=0.5.$
The question is: what are the upper and lower probabilities of $A$ {\it given} the treatment assignments in  $\mathcal{B} = \{B, B^{c}\}$?
For example, a version of the answer supplied here is
\begin{equation*}
\lpb(A \mid B) = 0  \, ,\  \upb(A \mid B) = 1, \quad {\rm and} \quad
\lpb(A \mid B^c) = 0 \, ,\ \upb(A \mid B^c) = 1.
\end{equation*}
The expressions $\lpb$ and $\upb$, where the subscript $\mathfrak{B}$ is for $\mathfrak{B}ayes$, signify the use of the generalized Bayes rule, as defined below. 

\begin{definition}[\textit{Generalized Bayes rule}]\label{def:gen_bayes}
Let $\Pi$ be a closed, convex set of probability measures on $\Omega$.  The conditional lower and upper probabilities according to the {\it generalized $\mathfrak{B}$ayes rule} are set functions $\lpb$ and $\upb$ such that, for $A, B \in \mathscr{B}(\Omega)$,
\begin{eqnarray}
\lpb \left(A\mid B\right) & = & \inf_{P\in\Pi} \frac{P\left(A\cap B\right)}{P\left(B\right)},\label{eq:lpb} \\
\upb \left(A\mid B\right) & = & \sup_{P\in\Pi} \frac{P\left(A\cap B\right)}{P\left(B\right)} \, . \label{eq:upb}
\end{eqnarray}
\end{definition}
That is, the conditional lower and upper probabilities are respectively the minimal and maximal Bayesian conditional probability among elements of $\Pi$. In their definition of the generalized Bayes rule, \cite{seidenfeld1993dilation} worked with the requirement that $\lp \left(B\right) > 0$, which guarantees $P \left(B\right) > 0$  for all $P\in \Pi$; hence the ratios in (\ref{eq:lpb}) and (\ref{eq:upb}) are always well-defined.

The generalized Bayes rule is a most widely employed updating rule for coherent lower and upper probabilities \citep{walley1991statistical}, and is notable for its dilation phenomenon. In Example~\ref{example:coin}, as a consequence of employing the rule, the conclusion appears puzzling: Tang will surely receive one of the two treatments, and one would expect that, in the worst case scenario, learning about the treatment assignment is completely useless, i.e., having no effect on our {\it a priori} assessment of $P(A)$. But how could it be that the knowledge of something can do more harm than being useless?  

To gain a better understanding of the behavior of the generalized Bayes rule, we introduce two alternative updating rules for sets of probabilities as a means of comparison. Both Dempster's rule of conditioning and the Geometric rule were originally proposed for use with the special case of belief functions, however their expressions compose intriguing counterparts to the generalized Bayes rule. Section~\ref{sec:results} is dedicated to a comparison among the trio of rules.

\subsubsection{Dempster's rule.}\label{subsubsec:dempster}
Dempster's rule of conditioning is central to the Dempster-Shafer theory of belief functions \citep{dempster1967upper,shafer1976mathematical}. The conditioning operation is a special case of Dempster's rule of combination, equivalent to combining one belief function with another that puts 100\% mass on one particular subset, $B\in \mathscr{B}(\Omega)$, on which we wish to condition. Specifically, let $\lp$ be a belief function such that $\lp\left(B\right)>0$, and $m$ be its associated mass function given by (\ref{eq:mobius}). Let $\lp_{0}$ be a separate belief function such that its associated mass function $m_{0}\left(B\right)=1$. The conditional belief function $\lpd \left(\cdot\mid B\right)$ is defined as 
\begin{eqnarray*}
\lpd \left(A\mid B\right) & = & \lp \left(A\right)\oplus \lp_{0}\left(B\right),  \quad {\rm for\ all\ } A\in \mathscr{B}\left(\Omega\right),
\end{eqnarray*}
where the combination operator ``$\oplus$'' is defined in \cite{shafer1976mathematical} to imply that the mass function associated with $\lpd \left(\cdot\mid B\right)$ is

\begin{equation}\label{eq:mass-cond-dempster}
m_{\mathfrak{D}}\left(A\mid B\right)=\frac{\sum_{C\cap B=A}m\left(C\right)}{\sum_{C'\cap B\neq\emptyset}m\left(C'\right)}, \quad {\rm for\ all\ }A \in \mathscr{B}\left(\Omega\right). 
\end{equation}

Consequently,  Dempster's rule of conditioning yields the following form.
\begin{definition}[\textit{Dempster's rule of conditioning}] \label{def:dempster}
For $\lp$ a belief function over $\mathscr{B}(\Omega)$ and $\Pi$ the set of probability measures compatible with $\lp$, the lower and upper probabilities according to {\it $\mathfrak{D}$empster's rule of conditioning} are set functions $\lpd$ and $\upd$ such that for $A, B \in \mathscr{B}(\Omega)$ with $\up \left(B\right) > 0$,
\begin{eqnarray}
\lpd \left(A\mid B\right) & = & 1 - \upd \left(A^{c}\mid B\right), \label{eq:lpd} \\
\upd \left(A\mid B\right) & = & \frac{\sup_{P\in\Pi} P\left(A\cap B\right)}{\sup_{P\in\Pi} P\left(B\right)} \, . \label{eq:upd}
\end{eqnarray}
\end{definition}
\noindent
Hence $\upd \left(A\mid B\right)$
differs from $\upb \left(A\mid B\right)$ of (\ref{eq:upb}) by taking the ratio of the suprema, instead of the supremum of the ratio $P\left(A\cap B\right)/P\left(B\right)$. 

An operational view of (\ref{eq:upd}) is helpful for understanding exactly what information is retained by Dempster's rule \citep{gong2017let}. Denote by $\mathcal{R}$ the set-valued random variable representing the distribution as dictated by the mass function corresponding to $\lp$. Dempster's rule of conditioning on set $B$ is akin to taking a ``$B$-shaped cookie cutter'' to all realizations of $\mathcal{R}$, i.e., retaining all intersection sets that $\mathcal{R}$ has with $B$ given it is non-empty, discarding the rest while renormalizing the retained sets such that their mass function $m_{\mathfrak{D}\left(\cdot \mid B\right)}$ as in (\ref{eq:mass-cond-dempster}) normalizes to one.

\subsubsection{The Geometric rule.}
The Geometric rule was proposed by \cite{suppes1977using} as an intended alternative to Dempster's rule. 
\begin{definition}[\textit{The Geometric rule}]  \label{def:geometric}
Let $\lp$ be a belief function as in Definition \ref{def:dempster}. The conditional lower and upper probabilities according to the {\it $\mathfrak{G}$eometric rule} are set functions $\lpg$ and $\upg$ such that for $A, B \in \mathscr{B}(\Omega)$ with $\lp \left(B\right) > 0$,
\begin{eqnarray}
\lpg \left(A\mid B\right) & = & \frac{\inf_{P\in\Pi}P\left(A \cap B\right)}{\inf_{P\in\Pi}P\left(B\right)}, \label{eq:lpg}\\
\upg \left(A\mid B\right) & = & 1 - \lpg \left(A^{c} \mid B\right) \, .\label{eq:upg}
\end{eqnarray}
\end{definition}


Mathematically, the Geometric rule appears to be a natural dual to Dempster's rule, by replacing the latter's suprema for upper probability as in (\ref{eq:upd}) with the infima for lower probability, as in (\ref{eq:lpg}). Operationally, the Geometric rule differs from Dempster's rule by retaining all mass-bearing sets of $\mathcal{R}$ that are contained within $B$, discarding the rest while renormalizing the resulting mass function. Section~\ref{sec:results} further describes some relationships between the two rules. In his review of \cite{shafer1976mathematical}, \cite{diaconis1978mathematical} discussed a paradoxical conclusion for the three prisoners example (reproduced here as Example~\ref{example:prisoner}) using Dempster's rule, and inquired about the option of the Geometric rule as an alternative rule of updating. As we will show in Section~\ref{subsec:sureloss}, the Geometric rule does no better job than Dempster's rule for this paradox, as in fact both exhibit precisely a {\it sure loss} phenomenon.

More updating rules for belief functions exist beyond Dempster's and the Geometric rule, including the disjunctive rule by \cite{smets1993belief} based on set union operations, the open-world conjunctive rule which is the unnormalized version of Dempster's rule as employed in the transferable belief models, as well as others, e.g., \cite{yager1987dempster,kohlas1991reliability,kruse1990specialization}. \cite{smets1991updating} provided a broad overview of an array of updating rules. 

\subsubsection{Applicability to Choquet capacities.}

The generalized Bayes rule was designed to work with coherent sets of probabilities, thus by default applicable to special cases of coherent probabilities such as Choquet capacities of order 2. \cite{wasserman1990bayes} showed that, when applied to prior sets of probabilities that are Choquet capacities of order 2, the posterior sets of probabilities by the generalized Bayes rule remain in the class. Specifically, \cite{fagin1991new} showed that, when they are applied to belief functions, the conditional probabilities will remain a belief function with the expression
\begin{eqnarray}
\lpb \left(A\mid B\right) & = & \lp \left(A\cap B\right)/\left(\lp \left(A\cap B\right)+\up\left(A^{c}\cap B\right)\right), \label{eq:lpb2}\\
\upb \left(A\mid B\right) & = & \up \left(A\cap B\right)/\left(\up \left(A\cap B\right)+\lp\left(A^{c}\cap B\right)\right). \label{eq:upb2}
\end{eqnarray}
The formula of the mass functions induced by the M\"{o}bius transform corresponding to $\lpb \left(\cdot\mid B\right)$ was given in \cite{jaffray1992bayesian}. 

On the other hand, can Dempster's rule and the Geometric rule be applied to sets of probabilities more general than belief functions? Below we show that they can for Choquet capacities of order $k$ where $k \ge 2$, and the corresponding conditional lower probability will remain Choquet capacities of order $k$.

\begin{theorem}
	Let $\lp$ be a $k$-monotone Choquet capacity on $\mathscr{B}(\Omega)$, and event $B$ such that the set functions $\lpd(\cdot \mid B)$ in (\ref{eq:lpd}) and $\lpg(\cdot \mid B)$ in (\ref{eq:lpg}) are well-defined.
	Then, $\lpd(\cdot \mid B)$ and $\lpg(\cdot \mid B)$ are both $k$-monotone.
\end{theorem}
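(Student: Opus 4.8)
The plan is to treat both rules uniformly by isolating the one structural fact that makes $k$-monotonicity survive conditioning: slicing by the fixed event $B$, i.e. the map $A \mapsto A\cap B$, is a lattice homomorphism (it commutes with finite unions and intersections), while the defining inequality (\ref{eq:choquet}) is positively homogeneous and hence unaffected by dividing through by a positive constant. Throughout I would use the conjugacy $\up(\cdot)=1-\lp(\cdot^{c})$ together with the equivalence, established by De Morgan duality (using that $\sum_{\emptyset\neq I\subset\{1,\dots,k\}}(-1)^{|I|-1}=1$), between $\lp$ being $k$-monotone and its conjugate $\up$ being $k$-alternating. The text records the ``only if'' direction of this equivalence; the same duality gives the ``if'' direction, which is what the Dempster case will need.

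First I would dispose of the Geometric rule, which is immediate. By (\ref{eq:lpg}), $\lpg(A\mid B)=\lp(A\cap B)/\lp(B)$, where well-definedness guarantees $\lp(B)>0$. Fix a Borel collection $\{A,A_{1},\dots,A_{k}\}$ with $A_{i}\subset A$, and set $A'=A\cap B$ and $A_{i}'=A_{i}\cap B$. Then $A_{i}'\subset A'$, and since slicing commutes with intersection, $(\cap_{i\in I}A_{i})\cap B=\cap_{i\in I}A_{i}'$ for every $I$. Multiplying the target inequality for $\lpg(\cdot\mid B)$ through by $\lp(B)>0$ turns it into exactly the $k$-monotonicity inequality (\ref{eq:choquet}) for $\lp$ on the sliced collection $\{A',A_{1}',\dots,A_{k}'\}$, which holds by hypothesis. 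Hence $\lpg(\cdot\mid B)$ is $k$-monotone.

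Next I would handle Dempster's rule by passing to the conjugate. By (\ref{eq:upd}) and conjugacy, $\upd(A\mid B)=\up(A\cap B)/\up(B)$ with $\up(B)>0$, and (\ref{eq:lpd}) shows directly that $\upd(\cdot\mid B)$ is the conjugate of $\lpd(\cdot\mid B)$. It therefore suffices to show $\upd(\cdot\mid B)$ is $k$-alternating. Fixing $\{A,A_{1},\dots,A_{k}\}$ with $A\subset A_{i}$ and again writing $A'=A\cap B$, $A_{i}'=A_{i}\cap B$, now slicing commutes with union, $(\cup_{i\in I}A_{i})\cap B=\cup_{i\in I}A_{i}'$, and $A'\subset A_{i}'$. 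Multiplying the target $k$-alternating inequality for $\upd(\cdot\mid B)$ by $\up(B)>0$ reduces it to the $k$-alternating inequality for $\up$ on $\{A',A_{1}',\dots,A_{k}'\}$, which holds because $\up$ is $k$-alternating (being conjugate to the $k$-monotone $\lp$). Thus $\upd(\cdot\mid B)$ is $k$-alternating, and by the conjugacy equivalence $\lpd(\cdot\mid B)$ is $k$-monotone.

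I expect no serious obstacle, since the heart of the matter is just that $A\mapsto A\cap B$ is a lattice homomorphism and that (\ref{eq:choquet}) is scale-invariant. The one point demanding care is the Dempster case, where I must route through the conjugate capacity rather than manipulate $\lpd(\cdot\mid B)$ directly: this requires verifying both that $\upd(\cdot\mid B)$ and $\lpd(\cdot\mid B)$ are genuinely conjugate and that the monotone/alternating correspondence is used in its ``alternating $\Rightarrow$ monotone'' direction, which is the converse of what is explicitly stated after (\ref{eq:choquet}).
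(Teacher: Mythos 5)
Your proof is correct and follows essentially the same route as the paper's: for the Geometric rule you verify the $k$-monotone inequality directly via the identity $(\cap_i A_i)\cap B=\cap_i(A_i\cap B)$ and cancellation of the positive constant $\lp(B)$, and for Dempster's rule you verify $k$-alternation of $\upd(\cdot\mid B)$ via $(\cup_i A_i)\cap B=\cup_i(A_i\cap B)$ and then pass back through conjugacy. The only cosmetic difference is that you work with the definitional form of (\ref{eq:choquet}) (sets $A_i\subset A$) while the paper uses the equivalent union/intersection form, and you are somewhat more explicit about the ``alternating $\Rightarrow$ monotone'' direction of the conjugacy equivalence, which the paper invokes implicitly.
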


\begin{proof}
To say $\lp$ is $k$-monotone implies for all {Borel-measurable} collections $\{A_1,...,A_k\}$,
\begin{equation*}
	\lp\left(\cup_{i=1}^{k}A_{i}\right) \ge \sum_{i=1}^{k}\lp\left(A_{i}\right)-\sum_{i<j}\lp\left(A_{i}\cap A_{j}\right)+\cdots+\left(-1\right)^{k+1}\lp\left(\cap_{i=1}^{k} A_{i}\right) 
\end{equation*}
or, equivalently, $\up$ is $k$-alternating:
\begin{equation*}
\up\left(\cap_{i=1}^{k}A_{i}\right) \le  \sum_{i=1}^{k}\up\left(A_{i}\right)-\sum_{i<j}\up\left(A_{i}\cup A_{j}\right)+\cdots+\left(-1\right)^{k+1}\up\left(\cup_{i=1}^{k} A_{i}\right).
\end{equation*}

For Dempster's rule, we have
\begin{eqnarray*}
	\upd\left(\cap_{i=1}^{k}A_{i} \mid B \right)	& = &	\frac{\up\left(\left(\cap_{i=1}^{k}A_{i}\right)\cap B\right)}{\up\left(B\right)}
	 = \frac{\up\left(\cap_{i=1}^{k}\left(A_{i}\cap B\right)\right)}{\up\left(B\right)} \\
	&\le &\begin{split}
		\frac{1}{\up\left(B\right)}\cdot\left[\sum_{i=1}^{k}\up\left(A_{i}\cap B\right)-\sum_{i<j}\up\left(\left(A_{i}\cap B \right) \cup \left(A_{j}\cap B\right)\right)+\cdots \right. \end{split} \\ 
	&  & \begin{split} \left. + \left(-1\right)^{k+1}\up\left(\cup_{i=1}^{k}\left(A_{i}\cap B\right)\right)\right]
	\end{split} \\
	& = & \sum_{i=1}^{k}\upd\left(A_{i}\mid B\right)-\sum_{i<j}\upd\left(A_{i}\cup A_{j}\mid B\right)+\cdots \\
    & &	 + \left(-1\right)^{k+1}\upd\left(\cup_{i=1}^{k}A_{i}\mid B\right).
\end{eqnarray*}

Similarly, for the Geometric rule,
  \begin{eqnarray*}
	\lpg\left(\cup_{i=1}^{k}A_{i} \mid B \right)	& = &	\frac{\lp\left(\left(\cup_{i=1}^{k}A_{i}\right)\cap B\right)}{\lp\left(B\right)}
	 = \frac{\lp\left(\cup_{i=1}^{k}\left(A_{i}\cap B\right)\right)}{\lp\left(B\right)} \\
	&\ge & \begin{split} \frac{1}{\lp\left(B\right)}\cdot\left[\sum_{i=1}^{k}\lp\left(A_{i}\cap B\right)-\sum_{i<j}\lp\left(A_{i}\cap A_{j}\cap B\right)+\cdots \right. \end{split} \\
     & & \begin{split} \left. + \left(-1\right)^{k+1}\lp\left(\cap_{i=1}^{k}A_{i}\cap B\right)\right] \end{split} \\
	& = &	\sum_{i=1}^{k}\lpg\left(A_{i}\mid B\right)-\sum_{i<j}\lpg\left(A_{i}\cap A_{j}\mid B\right)+\cdots \\
	& & + \left(-1\right)^{k+1}\lpg\left(\cap_{i=1}^{k}A_{i}\mid B\right).
\end{eqnarray*}
Hence $k$-monotonicity is preserved by both Dempster's and the Geometric rules of updating when applied to $k$-monotone Choquet capacities.
\end{proof}

\section{The unsettling updates in imprecise probabilities}\label{sec:updates}

Because an imprecise model permits, and indeed requires, a choice of updating rule, it may exhibit updates that has troubling interpretations, notably {\it dilation}, {\it contraction} and {\it sure loss}. This section supplies a detailed look at these phenomena. {We emphasize that the subscript ``$\bullet$'' used in the definitions below is crucial because, given the same imprecise model specification, a phenomenon can be induced by one rule but not by another. The choice among updating rules is inseparable from the choice of assumption of a missing information mechanism, and it would be wrong to think that an observable event, as a mathematical constraint, is taken literatim in imprecise probability conditioning. The operational interpretations of Dempster's rule and the Geometric rule presented in Section~\ref{sec:updates} highlight the different uses, by different rules, of the information in the same event being conditioned upon.

\subsection{Dilation and contraction}\label{subsec:dilation}

\begin{definition}[\textit{Dilation}]\label{def:dilation}
Let $A\in \mathscr{B} \left(\Omega\right)$ and $\mathcal{B}$ be a {Borel measurable} partition of $\Omega$. Let $\Pi$ be a closed, convex set of probability measures defined on $\Omega$, $\lp$ its lower probability function, and $\lps$ the conditional lower probability function supplied by the updating rule ``$\bullet$''. We say that $\mathcal{B}$ \emph{strictly dilates} $A$ under the $\bullet$-rule if 
\begin{equation} \label{eq:dilation}
	\sup_{B\in\mathcal{B}}\lps\left(A\mid B\right)<\lp\left(A\right)\le\up\left(A\right)<\inf_{B\in\mathcal{B}}\ups\left(A\mid B\right).
\end{equation}
If either (but not both) outer inequality is allowed to hold with equality, we simply say $\mathcal{B}$ \emph{dilates} $A$ under the said updating rule.
\end{definition} 

Dilation means that the conditional upper and lower probability interval of an event $A$ contains that of the unconditional interval, regardless of which $B$ in the space of possibilities $\mathcal{B}$ is observed. Inference for $A$, as expressed by the imprecise probabilities under the chosen updating rule, will become strictly less precise regardless of what has been learned. This is commonly perceived as unsettling, because one would expect that learning, at least in \emph{some} situations, ought to help the model deliver sharper inference, reflected in a tighter probability interval. But when dilation happens, it seems that as we learn, knowledge does not accumulate and quite the contrary, diminishes surely.

If dilation is something one finds unsettling, the opposing notion, {\it contraction}, should be nothing less. Contraction happens when the posterior upper and lower probability interval becomes strictly contained within that of the prior, regardless of what is being learned.  If a tighter probability interval symbolizes more knowledge, when contraction happens, it is as if some knowledge is created out of thin air.
How could it be that whatever is learned, we could always eliminate a fixed set of values of probability that were {\it a priori} considered possible? If we could have eliminated them by a pure thought experiment that can never fail, why wouldn't we have eliminated them \textit{a priori}? Formally, contraction is defined as follows.

\begin{definition}[\textit{Contraction}]
Let $A$, $\mathcal{B}$ and $\lps$ be the same as in Definition \ref{def:dilation}. We say that $\mathcal{B}$ \emph{strictly contracts} $A$ under the $\bullet$-rule if 
\begin{equation} \label{eq:contraction}
\lp \left(A\right)<\inf_{B\in\mathcal{B}}\lps \left(A\mid B\right)\le\sup_{B\in\mathcal{B}}\ups\left(A\mid B\right)<\up\left(A\right).	
\end{equation}
If either (but not both) outer inequality is allowed to hold with equality, we simply say $\mathcal{B}$ \emph{contracts} $A$ under the said updating rule.

\end{definition}


We now illustrate these two unsettling updating phenomena using Example \ref{example:boxer}, although we defer the discussion of their interpretations to Section~\ref{sec:discussion}.

\begin{example}{\ref{example:boxer} cont.}[\textit{The boxer, the wrestler, and the coin flip}]

By the setup of the model, we know precisely that the coin is fair:
\begin{equation}\label{eq:boxer-prior-x}
	P(X = 0) = P(X = 1) = 1/2.
\end{equation}
However, no information is available about either fighter's chance of winning. That is, if we assume the probability of a boxer's win $P(Y = 1) = \pi$, $\pi$ is allowed to vary between $[0, 1]$. Then according to the imprecise model,
\begin{equation}\label{eq:boxer-post-y}
	\lp(Y=1)=0 \, , \qquad \up(Y=1)=1
\end{equation}
and similarly so for the wrestler's win: $\lp(Y=0)=0, \up(Y=0)=1$. The known probabilistic margins specify a belief function, as displayed in Table~\ref{table:boxer}.

\begin{table}[H]
\caption{\label{table:boxer} Example~\ref{example:boxer} (boxer and wrestler): mass function representation of the belief function model}
\def\arraystretch{1.5}
\begin{tabular}{|c|c|c|}
\hline 
 & coin lands head, either fighter wins  & coin lands tails, either fighter wins \tabularnewline
 & $\left(X,Y\right)\in\left\{ 1\right\} \times\left\{ 0,1\right\} $ &  $\left(X,Y\right)\in\left\{ 0\right\} \times\left\{ 0,1\right\} $\tabularnewline
\hline 
$m\left(\cdot\right)$ & 0.5 & 0.5\tabularnewline
\hline
\end{tabular}
\end{table}

When told $X=Y$, how should the model at hand be revised?  Two aspects are worth noting: 

\smallskip
\subparagraph{i) Posterior inference for the fighters.} 
As \cite{gelman2006boxer} noted, Dempster's rule contracts the boxer's chance of winning, because
\begin{eqnarray*}
	\lpd \left(Y=1\mid X=Y\right) = 1/2,
& \qquad & \upd\left(Y=1\mid X=Y\right) = 1/2, \\
	\lpd \left(Y=1\mid X\neq Y\right) = 1/2,
& \qquad & \upd\left(Y=1\mid X\neq Y \right) = 1/2
\end{eqnarray*}
which are strictly contained within the vacuous prior probability interval as in (\ref{eq:boxer-post-y}). The calculations given the two alternative conditions $X = Y$ and $X \neq Y$ are identical due to symmetry of the setup. In contrast, the generalized Bayes rule cannot contract  vacuous prior interval, in this example and in general (see Theorem~\ref{thm:vacuous-interval}). 

\smallskip
\subparagraph{ii) Posterior inference for the coin.}
Intriguingly, the generalized Bayes rule dilates the precise {\it a priori} information (\ref{eq:boxer-prior-x}) on the coin's chance of coming up heads, because
\begin{eqnarray*}
	\lpb \left(X=1\mid X=Y\right) = 0,
& \qquad & \upb\left(X=1\mid X=Y\right) = 1, \\
	\lpb \left(X=1\mid X\neq Y\right) = 0,
& \qquad & \upb\left(X=1\mid X\neq Y\right) = 1.
\end{eqnarray*}
In contrast, Dempster's intervals remain identical to that of the prior interval under either $X=Y$ or $X\neq Y$. Notice that in this example, $\lp\left(X=Y\right)=\lp\left(X\neq Y\right)=0$ hence the Geometric rule is not applicable. The generalized Bayes rule in the sense of \citeauthor{seidenfeld1993dilation} (see Definition \ref{def:gen_bayes}) is not applicable either; however since the the model is a belief function, we use \citeauthor{fagin1991new}'s results as given in (\ref{eq:lpb2}) and (\ref{eq:upb2}) to obtain the above expressions. This is equivalent to minimizing and maximizing over the restricted sets of probabilities $\{P: P\ge \lp\ \& \ P\left(X=Y\right) > 0 \}$ and $\{P: P\ge \lp\ \& \ P\left(X \neq Y\right) > 0 \}$ respectively, thus avoiding ill-defined probability ratios. 

\end{example}
 

\subsection{Sure loss}\label{subsec:sureloss}

The next type of updating anomaly is even more unsettling, as it is usually regarded as an infringement on the logical coherence of probabilistic reasoning. 

\begin{definition}[\textit{Sure loss}]\label{def:sure-loss}
Let $A$, $\mathcal{B}$, $\lp$, and $\lps$ be the same as in Definition \ref{def:dilation}. We say that $\mathcal{B}$ incurs \emph{sure loss} in $A$ under the $\bullet$-rule if either
\begin{equation}
	\inf_{B\in\mathcal{B}} \lps \left(A\mid B\right)>\up\left(A\right), \label{eq:sure-loss}
\end{equation}
or
\begin{equation}
\sup_{B\in\mathcal{B}}\ups\left(A\mid B\right)<\lp\left(A\right). \label{eq:sure-gain}
\end{equation}
\end{definition}

Sure loss describes a universal and uni-directional displacement of probability judgment before and after conditioning on any event from a subalgebra. That is, after learning anything, the event in question becomes altogether more (or less) likely than before. The terminology ``sure loss'' stems from the Bayesian decision-theoretic context, where probabilities are seen to profess personal preferences contingent on which one is willing to make bets. If $\mathcal{B}$ incurs sure loss in $A$, the beholder of $\lp$ and $\lps$ as her personal prior and posterior imprecise probabilities respectively, can be made to commit a compound bet with a guaranteed negative payoff. 

To see this, let $s,t$ be two numbers such that $\inf_{B\in\mathcal{B}}\lps \left(A\mid B\right)>s>t>\up \left(A\right)$, that is, we assume sure loss in the form of (\ref{eq:sure-loss}). Since $t>\overline{P}\left(A\right)$, I shall accept a bet for which I pay $1-t$, get $1$ back if $A$ did not occur, and nothing if it did, because my expected payoff is $P(A^c)-(1-t)=t-P(A)\ge t-\up(A)>0$. On the other hand, since $\lps \left(A\mid B\right)>s$ for all $B$, contingent on any $B$ I shall also accept bets for which I pay $s$, get $1$ back if $A$ did occur and nothing if it did not, because regardless of which $B$ occurs, my expected payoff $P(A\mid B)-s\ge \inf_{B\in\mathcal{B}}\lps\left(A\mid B\right)-s>0$. It therefore seems perfectly logical for me to take both bets, as both are expected to have positive return. However, if I do take both bets, then the compound bet is the one with guaranteed payoff of only $1$, less than what I have paid for $1-t+s$ because $s>t$. Therefore, endorsing $\lps$ as the updating rule means one is willing to accept a finite collection of bets and be certain to lose money, a trademark incoherent behavior.

Note that if $\mathcal{B}$ incurs sure loss in $A$ in the form of (\ref{eq:sure-loss}), it equivalently incurs sure loss in $A^{c}$ as well in the form of (\ref{eq:sure-gain}), though perhaps the term {\it sure gain} would be more appropriate -- in \'{E}mile Borel's words, the former the ``imbecile'' and the latter the ``thief''. Whenever a distinction is necessary, we will use the term sure gain in addition to sure loss to highlight the directionality of displacements of posterior probability intervals compared to that of the prior, and will otherwise follow the pessimistic convention (which seems to be a hallmark of statistical or probabilistic terms, such as ``risk", ``regret'', ``regression'')  of the literature and use ``sure loss'' to refer to both situations if non-ambiguous.

We emphasize again that both dilation and sure loss, as concepts describing the change from prior to posterior sets of probabilities, are contingent upon the updating rule. Even with the same imprecise probability model $\lp$, the same partition $\mathcal{B}$ and the same event $A$, it can well be the case that $\mathcal{B}$ dilates $A$ under one rule and induces sure loss in $A$ under the other. Example~\ref{example:prisoner} below is a situation in which all three rules behave very differently, and Section~\ref{sec:results} is dedicated to a characterization of their differential behavior.

We are now ready to take a careful look at the three prisoners paradox.

\begin{example}{\ref{example:prisoner} cont.}[\textit{Three prisoners}]

What do we have about the probabilistic model behind the three prisoners'? Since exactly one of the three prisoners will receive parole randomly, the prior probabilities of living for each of them are all exact:
$$P\left(A\text{ lives}\right)=P\left(B\text{ lives}\right)=P\left(C\text{ lives}\right)=1/3.$$
Furthermore, since the guard cannot lie, he has no choice on who to report if the inquirer $A$ does not receive parole. That is,
$$P(\text{guard says }C\mid B\text{ lives})=P(\text{guard says }B\mid C\text{ lives})=1.$$
The above probability specification can be expressed as a belief function model, with mass distribution dictated by the known model margins as represented in Table~\ref{table:prisoner}:
	
\begin{table}[H]
\caption{\label{table:prisoner} Example~\ref{example:prisoner} (three prisoners): mass function representation of the belief function model}	
\def\arraystretch{1.5}
\begin{tabular}{|c|c|c|c|}
\hline  
 & A lives, guard says \{B, C\} & B lives, guard says C & C lives, guard says B\tabularnewline
\hline 
$m\left(\cdot\right)$ & $1/3$ & $1/3$ & $1/3$\tabularnewline
\hline 
\end{tabular}
\end{table}

We see from the specification that what remains unknown is, in case $A$ indeed receives parole, the propensity of the guard reporting either $B$ or $C$ as dead had he the freedom to choose: 
\begin{equation}\label{eq:delta_b}
	{\delta_B} = P(\text{guard says }{ B}\mid { A} \text{ lives}) \in [0,1].
\end{equation}
As a consequence, the posterior probability of $A$ living is
\begin{equation}\label{eq:delta}
P({ A}  \text{ lives} \mid \text{guard says } { B}) = \delta_B/ (1+\delta_B).
\end{equation}
This extra degree of freedom $\delta_B$ fully characterizes the set of probabilities implied by the model.

There is a long literature documenting the variety of modes of reasoning to this problem, e.g., \cite{mosteller1965fifty} and \cite{morgan1991let} which invoked a similar construction as the $\delta_B$ above, in explicating the reasons why many are seemingly intuitive yet riddled with logical fallacies. Four types of ``popular'' answers are reproduced below, reflecting different ways of treating the unknown value $\delta_B$. What's interesting is that, as we will see, three of these answers correspond to those given by the three conditioning rules respectively.

\smallskip
\subparagraph{i) The indifferentist: assumption of ignorability}
One of the most commonly made assumptions is that the guard has no preference one way or the other about who to report when given the freedom, that is, $\delta_B =1/2$, thus
\begin{equation*}
P({ A}  \text{ lives} \mid \text{guard says } { B},\delta_B =1/2) = {1}/{3}.
\end{equation*}
That is to say, prisoner $A$ would not have benefitted from the knowledge that $B$ is going to be executed, precisely as he claimed to the guard to begin with. The assumption of guard's indifference is equivalent to the {\it ignorability} assumption commonly employed in the treatment of missing and coarse data. Despite being intuitive, the assumption is not backed by the model description per se. Neither the posited imprecise model nor the data as reported by the guard can supply any logical evidence to support the ignorability assumption. Therefore, the assertion that ignorability is ``intuitive'' is a judgment that can be as unreasonable as any other seemingly less intuitive ones, such as the ones below.

\smallskip
\subparagraph{ii) The optimist: Dempster's rule}
Applying Dempster's rule, we have
\begin{equation*}
\lpd (A\text{ lives}\mid\text{guard says }B) = 1/2 \, , \qquad \upd (A\text{ lives}\mid \text{guard says }B) = 1/2. 
\end{equation*}
Thus prisoner $A$ felt happier now that his chance of survival increased from $1/3$ to $1/2$. This happiness is gained from assuming the optimistic scenario of $\delta_B=1$, that is, the guard chose a reporting mechanism that has the highest likelihood given $A$ lives. However, one realizes that the guard could have only reported either $B$ or $C$, both fully symmetrical in the prior. Had the guard said $C$ would be executed, $A$ would again apply Dempster's rule, thus grow happier following the same logic by effectively assuming $\delta_C= P(\text{guard says }{C}\mid {A} \text{ lives})=1$. Under the assumption that the guard cannot lie and cannot refuse to answer, $\delta_B+\delta_C=1$, thus $\delta_B$ and $\delta_C$ cannot be $1$ simultaneously. Hence the reasoning that whatever the guard says, the probability of $A$ living will go up from $1/3$ to $1/2$, which is equivalent to assuming the impossible $\delta_B=\delta_C=1$, is a direct consequence of a logical fallacy.  

\smallskip
\subparagraph{iii) The pessimist: the Geometric rule}
Applying the Geometric rule, we have
\begin{equation*}
\lpg (A\text{ lives}\mid\text{guard says }B) = \upg (A\text{ lives}\mid \text{guard says }B) = 0
\end{equation*}
and, by symmetry,
\begin{equation*}
\lpg (A\text{ lives}\mid\text{guard says }C) = \upg (A\text{ lives}\mid \text{guard says }C) = 0. 
\end{equation*}
This answer is perhaps the most striking among all, directly pointing at the absurdity of the assumptions behind the updating rule within this context. Upon hearing anything, prisoner $A$ will deny himself of any hope of living, effectively assuming $\delta_B=0$ if guard says $B$ and $\delta_C=0$ if guard says $C$, two assumptions that are incommensurable with each other because $\delta_B+\delta_C=1$, much in the same way as the previous case with Dempster's rule.

\smallskip 
\subparagraph{iv) The conservatist: generalized Bayes rule}
The solution suggested by \cite{diaconis1978mathematical}, and indeed supplied by the generalized Bayes rule, is
\begin{equation}\label{eq:prisoner-a-given-b}
		\lpb (A\text{ lives}\mid \text{guard says }B) = 0 \, , \qquad \upb (A\text{ lives}\mid \text{guard says }B) = 1/2.
\end{equation}
This answer is a direct consequence of (\ref{eq:delta}). As $\delta_B$ varies within $[0,1]$ without any further assumption, one is bound to concur with (\ref{eq:prisoner-a-given-b}). The caveat to it, however, is that again due to prior symmetry of $B$ and $C$, the generalized Bayes rule will also yield 
\begin{equation*}
\lpb (A\text{ lives}\mid\text{guard says }C) = 0 \, , \qquad \upb (A\text{ lives}\mid \text{guard says }C) = 1/2.
\end{equation*}
Hence, the generalized Bayes rule results in posterior probability intervals strictly containing the prior probability in all situations.
\end{example}

Our use of the vocabulary ``optimism'', ``pessimism'' and ``conservatism'' to refer to the three updating rules is informed by the interpretation of their respective posterior inference under the effective assumptions they each impose, and is reminiscent of that of \cite{fygenson2008modeling} for modeling of extrapolated probabilities. These ideological differences illuminate the dynamics among the updating rules for imprecise probability, and highlight the pedagogical significance of the three prisoners' paradox itself. In this example, Dempster's rule updates its conditional lower probability to be greater than that of its prior upper probability thus incurs sure loss of the form (\ref{eq:sure-loss}), the Geometric rule behaves the opposite way and incurs sure loss of the form (\ref{eq:sure-gain}), and the generalized Bayesian rule exhibits dilation. As far as unsettling updating goes, there seems to be no escape regardless of which rule to choose. How on earth then do we draw a conclusion?

\begin{figure}
\noindent \begin{centering}
\includegraphics[width=.7\textwidth]{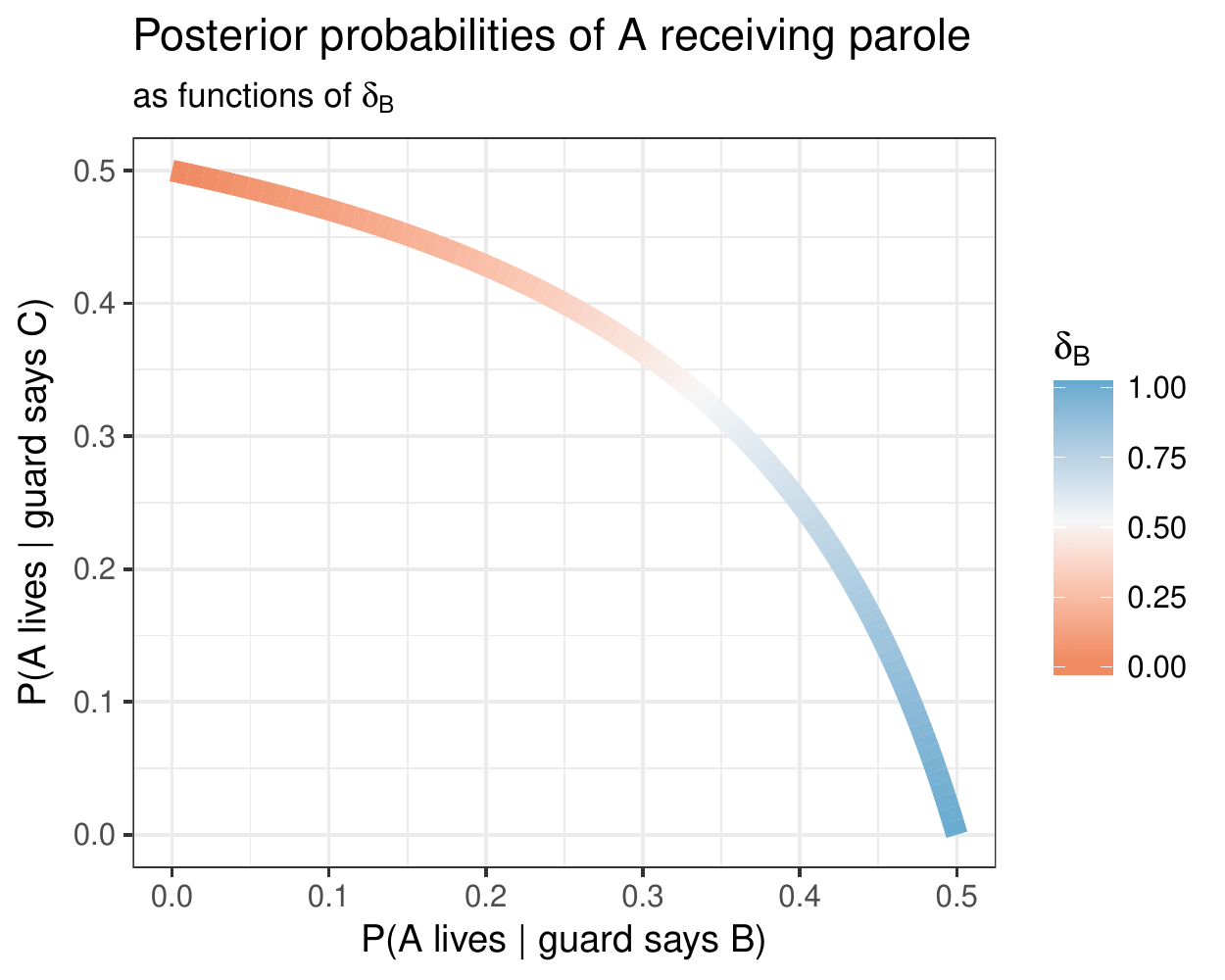}
\par\end{centering}
\caption{\label{fig:prisoner} Posterior probabilities of prisoner $A$ receiving parole given the guard's two possible answers, as a function of the guard's reporting bias $\delta_B$ (\ref{eq:delta_b}). }
\end{figure}

Reading through the literature, the dilated answer supplied by the generalized Bayes rule is the most accepted solution to the paradox. As counterintuitive as it may be, dilation is a professed consequence of an overfitting nature of the generalized Bayes rule, for the rule is inclusive of all possibilities allowed within the ambiguous model, to the point of simultaneously admitting assumptions that are {\it incommensurable} with one another. As we saw previously, the upper conditional probability $\upb (A\text{ lives}\mid \text{guard says }Z) = 1/2$ is achieved under the assumption $\delta_Z=1$, where $Z=C, B$. Similarly, the lower conditional probability $\lpb (A\text{ lives}\mid\text{guard says }Z) = 0$ is achieved when $\delta_Z=0$, with $Z=C, B$. Since $\delta_C+\delta_B=1$,
$\delta_C$ and $\delta_B$ cannot simultaneously be $0$ or $1$: indeed, when one is $1$ the other must be $0$. Hence the permissible value of the \textit{pair} 
\begin{equation*}
\{x=P(A\text{ lives}\mid \text{guard says }B), y=P(A\text{ lives}\mid \text{guard says }C) \}	
\end{equation*}
forms a one-dimensional curve  $y = (1-2x)/(2-3x)$ inside the square $[0, 1/2]\times[0, 1/2]$, as depicted in Figure \ref{fig:prisoner}. For a given conditioning event $Z$, the generalized Bayes rule achieves its extremes by seeking a distribution that itself depends on $Z$, namely, a \textit{conditioning-dependent} conditional distribution $P^{(Z)}(\cdot|Z)$, a clear case of overfitting. Understanding the hidden {\it incommensurability} is important for preventing logical fallacies such as reasoning under the (wrong) assumption that $\{x, y\}$ can take any value inside the square $[0, 1/2]\times[0, 1/2]$. We will return to the three prisoners again in Section~\ref{subsec:decision} to discuss its inferential implications. In particular, the three prisoners' paradox is a direct variant of the Monty Hall problem, which possesses a clean, indisputable decision recommendation.

\subsection{What's unsettled in unsettling updates?}\label{subsec:unmeasurable}

In case some readers are not yet completely put off by the unsettling updates, we would like to offer a few words about when, as well as when not, one {\it should} find dilation or sure loss unsettling. It seems to us that the attitude one should take towards these phenomena is contingent upon the way the underlying probability model is interpreted. 

Specifically, dilation is troubling when the set of probabilities is used as a description of uncertain inference. For example, if the probability interval is regarded as an approximation to some underlying true probability state, akin to a confidence or posterior interval to an estimand, knowing that the interval will surely grow wider in the posterior is indeed counterproductive since the goal of inference in most cases is to tighten the interval. But in this sense, the sure loss phenomenon may just be fine, since it is common to derive disjoint yet equally valid confidence or posterior intervals from the same sampling posterior distribution, without violating any classic rules of probabilistic calculation. 

On the other hand, as explained in Section~\ref{subsec:sureloss}, the lower and upper probabilities can be taken as acceptable prices of a gamble. Under this interpretation, any strategy that induces sure loss is absolutely unacceptable. However in this case, dilation is much less to be worried about: a strictly wider interval in the posterior will simply exclude the player from engaging in the conditional bet, and does not violate coherence in a decision-theoretic sense.



With precise probabilities, by conditioning on an observable event we are simply imposing a restriction to the sub-space that is defined by that event, which itself is assumed to be measurable with respect to the original probability space. With imprecise probabilities, not all events are measurable with respect to the imprecise probability model specified on the full joint space, and a crucial way the updating rules differ from one another is how they make use of this supplied conditioning information. Therefore, for any of the updating rules to function at all, they must build within themselves a particular ``mechanism'' of imposing the mathematical restriction specified by the observable event, when it is not currently measurable with respect to the set of probabilities the rule aims to update, much in the same way as a sampling mechanism \citep{kish1965survey} or missing-data mechanism \citep{rubin1976inference}. The fact that dilation and sure loss cannot happen under the precise probability does not necessarily render them undesirable: the quality of this inference hinges on the quality of the final action they recommend. Bringing these anomalies to light allows us to study their implications, especially those unfamiliar or unexpected, on the final action.


\section{Simpson's paradox: an imprecise model with aggregation sure loss}\label{sec:simpson}

One may well think that the examples discussed in Section \ref{sec:intro} lie on the boundary, if not outside, of the realm of mainstream statistical modeling. Imprecise models do not seem to be the kind of thing one just stumbles upon, they exist only when one makes them exist. We argue that such is not the case, that all precise models are really just the tip of an ``imprecise model iceberg''. That is, every precise model is a fully specified margin nested within a larger, ever-augmentable model, with extended features we have not allowed entrance to the modeling scene, and possibly lacking the knowledge to do so precisely. 

Here is a concrete way to induce an imprecise model from a precise one. Take any precise model with dimensions $\left(X_{1},...,X_{p}\right)$ which jointly merit a known multivariate distribution. If the model is expanded to include a previously unobservable margin $X_{p+1}$, all of a sudden the state space becomes $\left(p+1\right)$-dimensional. The resulting augmented model becomes imprecise, for as many as $2^{p}$ new marginal relationships are left to be specified or learned -- that is, between $X_{p+1}$ and any subset of $\left(X_{1},...,X_{p}\right)$. In the regression setting where a multivariate Normal model is assumed for the previous $p$ variables, one seemingly straightforward way is to go and model $\left(X_{1},...,X_{p},X_{p+1}\right)$ as jointly Normal, which is already a very strong assumption that takes care of a vast majority of the $2^{p}$ relationships. Even under such drastic simplification, the mean and $p+1$ bivariate covariances are still left to specify, resulting in a family of $\left(p+1\right)$-dimensional Normal models.

In reality, the relationship between the existing $\left(X_{1},...,X_{p}\right)$ and a new $X_{p+1}$ is often something the analyst is neither knowledgeable nor comfortable assuming. This is the case when $X_{p+1}$ is a lurking variable in observational studies which may have strong collinearity with subsets of  $\left(X_{1},...,X_{p}\right)$. Using the language of imprecise probability, we now turn to decipher Simpson's paradox, a famous and familiar setting with its far-reaching significance. The very occurrence of Simpson's paradox is proof that we have employed, likely due to lack of control, an aggregation rule that has incurred sure loss in inference.

\begin{example}{\ref{example:simpson} cont.}[\textit{Simpson's paradox}]
Following the setup in Section \ref{sec:intro}, Simpson's paradox refers to an apparent contradiction between an inference on treatment efficacy at an aggregated level,  $\bar{p}_{\text{obs}} < \bar{q}_{\text{obs}}$, and the inference at the disaggregated level when the covariate type of the patient has been accounted for: $p_{k} > q_{k}$ for all $k = 1,...,K$. Indeed, how can a treatment be superior than its alternative in every possible way, yet be inferior \emph{overall}?
\end{example}

\subsection{Explicating the aggregation rules underlying the Simpson's paradox}
Denote for $k=1,...,K$,
\begin{equation*}
	u_{k} = P\left(U=k\mid Z=1\right), \qquad v_{k} = P\left(U=k\mid Z=0\right).
\end{equation*}
Here, ${\bf u}$ and ${\bf v}$ reflect the demographic distribution of the populations receiving the experimental and control treatments, respectively. By the law of total probability,
\begin{equation}\label{eq:simp-p}
	\bar{p} = {\bf p}^\top{\bf u} \quad {\rm and}\quad \bar{q} = {\bf q}^\top{\bf v}, \end{equation}
and thus given fixed {\bf p} and {\bf q}, $\bar{p}$ and $\bar{q}$ are functions of ${\bf u}$ and ${\bf v}$ respectively. The marginal probabilities $\bar{p}$ and $\bar{q}$ are meant to describe an event under conditions of inferential interest, in this case, patient recovery within the two treatment arms. We refer to ${\bf u}$ and ${\bf v}$ as \emph{aggregation rules}, functions that map conditional probabilities to a marginal probability, which is in reverse direction compared to \emph{updating rules} as discussed in the previous sections, which are maps from a marginal probability to a set of conditional probabilities.


Typically, measurements between different conditions are made for the purpose of a comparison, such as the evaluation of an causal effect of treatment $Z$ on outcome $Y$. A comparison between $\bar{p}$ and $\bar{q}$ is {\it fair} if and only if the aggregation rules they employ are identical, that is, ${\bf u} = {\bf v} $ as in (\ref{eq:simp-p}). This is what it means to say the comparison has been made between apples and apples. Such is the case if no confounding exists between the covariate $U$ and the propensity of assignment, i.e., $U \perp Z$. 

Clearly, when ${\bf u} = {\bf v} $, $\bar{p} > \bar{q}$ if $p_k>q_k$ for all $k$. Hence Simpson's paradox is mathematically impossible within a fair comparison. However, for a given observed pair  $\bar{p}_{\text{obs}}$ and $\bar{q}_{\text{obs}}$, have we been careful enough to enforce the \emph{de-facto} aggregation rules to equal the ideal one? That is, do we have that the observed comparison is {\it fair enough}, i.e., there exists a ${\bf v}$ such that
\begin{equation} \label{eq:fair}
{\bf u}_{\text{obs}}	\doteq	{\bf v} \qquad \text{and} \qquad
{\bf v}_{\text{obs}}	\doteq	{\bf v}	?
\end{equation}


For certain values of ${\bf p}$ and ${\bf q}$, it is entirely possible that suitable realizations of $\left({\bf u}_{\text{obs}}, {\bf v}_{\text{obs}}\right)$ within the $K$-simplex could result in $\bar{p}_{\text{obs}} < \bar{q}_{\text{obs}}$. To be exact, these are ${\bf p}$ and ${\bf q}$ values satisfying {$\max_{k}q_{k}>\min_{k}p_{k}$}. At least one, and possibly both realizations of ${\bf u}_{\text{obs}}$ and ${\bf v}_{\text{obs}}$  play differentially to the relative weaknesses of ${\bf p}$, i.e., coordinates of smaller magnitude, and the strengths of ${\bf q}$ accordingly. When this preferential weighting, also known as \emph{confounding}, is strong enough to reverse the perceived stochastic dominance of the outcome variable under either treatment, an apparent paradox is induced. Randomization procedures effectively put quality guarantees on the fairness of comparison; as the sample size $n$ grows larger, (\ref{eq:fair}) holds with high probability with deviations quantifiable with respect to ${\bf p}$ and ${\bf q}$ that is immune against all $U$, observed or unobserved.

\begin{figure}
\noindent \begin{centering}
\includegraphics[width=.47\textwidth]{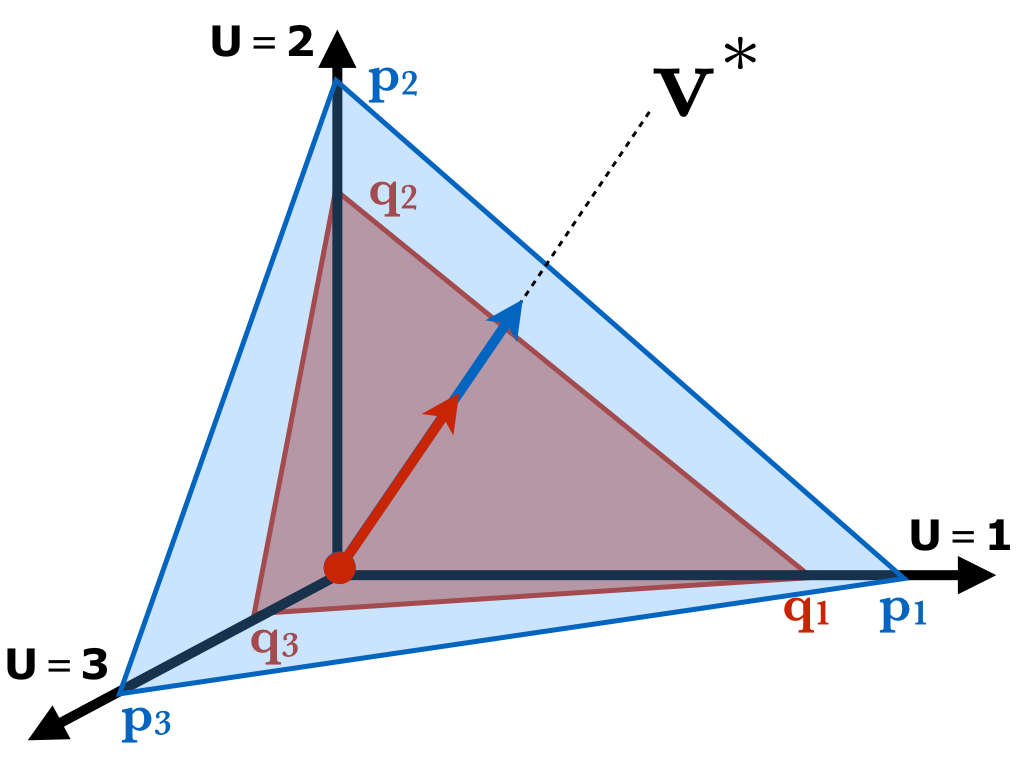}\hfill
\includegraphics[width=.47\textwidth]{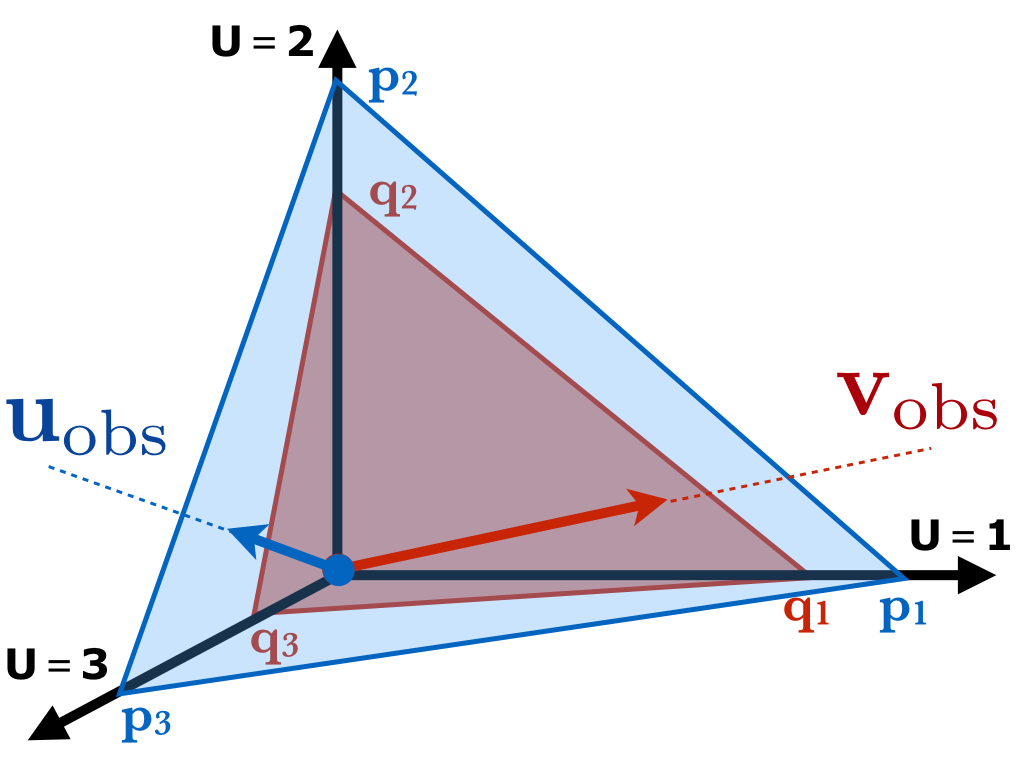}
\par\end{centering}
\caption{\label{fig:simpson} Ideal aggregating rules guarantee the comparison between treatment arms is made on a fair ground. Observed Simpson's paradox is  strong evidence that the {\it de-facto} aggregating rules are fair for comparison. Left: if $p_{k} > q_{k}$ for all $k$, then ${\bf p}^{\top}{\bf v} > {\bf q}^{\top}{\bf v}$ for all ${\bf v}$; Right: disparate ${\bf u}_{\text{obs}}$ and ${\bf v}_{\text{obs}}$ make possible $p_{\text{obs}} < q_{\text{obs}}$. Note that $\Pi$ in Theorem~\ref{thm:simpson-sure-loss} is the convex hull sandwiched between the blue (${\bf p}$) and red (${\bf q}$) hyperplanes in the first octant.}
\end{figure}

\subsection{The paradox is sure loss}
Simpson's paradox is reminiscent of the ``sure loss'' phenomenon we saw in earlier sections. Indeed, when not conditioned on $U$, if asked to pick a bet between the experimental and control treatments, we would prefer the control treatment over the experimental one.
But once conditioned on $U$, the experimental treatment suddenly became the superior bet regardless of $U$'s value. 
One is thus set to surely lose money by engaging in a combination of these two bets. That is,  We formalize this idea in the following theorem, where $\mathcal{S}_K$ is the standard $K$-simplex defined by
$\{(v_1,\ldots, v_K):\   \sum_{k=1}^Kv_k=1;\   v_k\ge 0,  \ k=1,\ldots, K\}.$

\begin{theorem}[\textit{Equivalence of Simpson's paradox and aggregation sure loss}]\label{thm:simpson-sure-loss}

Let $\Lambda$ be a convex hull in $[0,1]^{K}$ characterized by the pair of element-wise upper and lower bounds $\left({\bf p},{\bf q}\right)$. That is,
\[
\Lambda=\left\{ \boldsymbol{\lambda}\in\left[0,1\right]^{K}:q_{k}\le\lambda_{k}\le p_{k}, \  
 k=1,...,K\right\}.
\]
Let $\mathcal{V}\subseteq \mathcal{S}_K$ be a closed set (with respect to the Euclidean distance on $\mathbb{R}^K$) of \emph{desirable aggregation rules}, and let ${\bf u}\in \mathcal{S}_K$. Then, ${\bf u}$ incurs sure loss on $\Lambda$ relative to $\mathcal{V}$ if and only if $\left({\bf u},{\bf v}\right)$ induces Simpson's paradox in $\left({\bf p},{\bf q}\right)$ for all ${\bf v}\in\mathcal{V}$.
\end{theorem}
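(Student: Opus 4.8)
The plan is to collapse both sides of the claimed equivalence to a pair of scalar linear inequalities and then bridge them with a compactness argument. First I would fix a dictionary by reading $\Lambda$ as the imprecise model for the recovery event $A$, its members $\boldsymbol{\lambda}$ being the stratum-wise conditional recovery probabilities, so that each aggregation rule ${\bf w}\in\mathcal{S}_K$ produces an aggregated lower and upper probability $\inf_{\boldsymbol{\lambda}\in\Lambda}\boldsymbol{\lambda}^{\top}{\bf w}$ and $\sup_{\boldsymbol{\lambda}\in\Lambda}\boldsymbol{\lambda}^{\top}{\bf w}$. Since $\Lambda$ is the hyperrectangle $\prod_{k}[q_k,p_k]$ and every $w_k\ge 0$, these extrema are attained coordinatewise, which gives
\[
\inf_{\boldsymbol{\lambda}\in\Lambda}\boldsymbol{\lambda}^{\top}{\bf w}={\bf q}^{\top}{\bf w}\quad\text{and}\quad\sup_{\boldsymbol{\lambda}\in\Lambda}\boldsymbol{\lambda}^{\top}{\bf w}={\bf p}^{\top}{\bf w}.
\]
In particular, by (\ref{eq:simp-p}), the unconditional upper probability of $A$ under the de facto rule ${\bf u}$ is ${\bf p}^{\top}{\bf u}=\bar p$, and the lower probability of $A$ under a desirable rule ${\bf v}$ is ${\bf q}^{\top}{\bf v}=\bar q$.

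With this dictionary the two phrases become transparent. Because the conditional dominance $p_k>q_k$ of (\ref{eq:simpson-conditional}) is built into the orientation of $\Lambda$, the pair $({\bf u},{\bf v})$ induces Simpson's paradox in $({\bf p},{\bf q})$ precisely when the marginal ranking reverses, that is ${\bf p}^{\top}{\bf u}<{\bf q}^{\top}{\bf v}$; hence ``Simpson for all ${\bf v}\in\mathcal{V}$'' is the statement that ${\bf p}^{\top}{\bf u}<{\bf q}^{\top}{\bf v}$ for every ${\bf v}\in\mathcal{V}$. Instantiating the sure-loss criterion of Definition~\ref{def:sure-loss} in the form (\ref{eq:sure-loss}), with $\mathcal{V}$ playing the role of the conditioning family and ${\bf u}$ supplying the unconditional reference, reads $\inf_{{\bf v}\in\mathcal{V}}{\bf q}^{\top}{\bf v}>{\bf p}^{\top}{\bf u}$. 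The theorem therefore reduces to the scalar equivalence $\bigl(\inf_{{\bf v}\in\mathcal{V}}{\bf q}^{\top}{\bf v}>{\bf p}^{\top}{\bf u}\bigr)\Longleftrightarrow\bigl({\bf q}^{\top}{\bf v}>{\bf p}^{\top}{\bf u}\ \text{for all }{\bf v}\in\mathcal{V}\bigr)$.

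The last step is where the hypotheses do their only real work, and I expect it to be the main obstacle. The forward direction is immediate, since every ${\bf q}^{\top}{\bf v}$ dominates the infimum. For the converse I would invoke that $\mathcal{V}$, as a closed subset of the compact simplex $\mathcal{S}_K\subset\mathbb{R}^K$, is compact, and that ${\bf v}\mapsto{\bf q}^{\top}{\bf v}$ is continuous; by the extreme value theorem the infimum is attained at some ${\bf v}^{\ast}\in\mathcal{V}$, so the pointwise inequalities force $\inf_{{\bf v}\in\mathcal{V}}{\bf q}^{\top}{\bf v}={\bf q}^{\top}{\bf v}^{\ast}>{\bf p}^{\top}{\bf u}$, recovering the strict sure-loss inequality. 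Closedness of $\mathcal{V}$ is exactly what is needed here: without it the pointwise strict inequalities could persist while the infimum merely equals ${\bf p}^{\top}{\bf u}$, destroying the uniform, strict character of ``sure'' loss. Finally, the dual orientation, in which the roles of ${\bf p}$ and ${\bf q}$ are exchanged by relabeling the treatments, produces the companion sure-loss form (\ref{eq:sure-gain}) and is handled by the identical argument.
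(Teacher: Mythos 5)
Your proposal is correct and takes essentially the same route as the paper's proof: both reduce the two notions to the scalar inequalities $\sup_{\boldsymbol{\lambda}\in\Lambda}\boldsymbol{\lambda}^{\top}{\bf u}={\bf p}^{\top}{\bf u}$ versus $\inf_{{\bf v}\in\mathcal{V}}{\bf q}^{\top}{\bf v}$ (and the dual pair) using coordinatewise attainment of the extrema over the hyperrectangle $\Lambda$, and then observe that the sure-loss condition and the uniform Simpson's-reversal condition coincide. Your only departure is a welcome one: you make explicit, via compactness of $\mathcal{V}$ and the extreme value theorem, the attainment argument that turns pointwise strict inequalities into a strict inequality against the infimum, a step the paper compresses into the assertion that the two conditions are ``trivially the same.''
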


\begin{proof}
Denote the set of marginal probability derived from $\Lambda$ under the desirable aggregating rule $\mathcal{V}$ as $\mathcal{P}_{\mathcal{V}}=\left\{ \boldsymbol{\lambda}^{\top}{\bf v}:\boldsymbol{\lambda}\in\Lambda,{\bf v}\in\mathcal{V}\right\}. $ By the closeness of both $\Lambda$ and $\mathcal{V}$ (with respect to the Euclidean distance on $R^K$), we have 
\begin{equation}\label{eq:pv}
\inf\mathcal{P}_{\mathcal{V}}
	=\inf_{{\bf v}\in\mathcal{V}}{\bf q}^{\top}{\bf v}\quad  {\rm and}\quad 
    \sup\mathcal{P}_{\mathcal{V}}
	=	\sup_{{\bf v}\in\mathcal{V}}{\bf p}^{\top}{\bf v},
\end{equation}
and 
\begin{equation}\label{eq:pu}
{\bf p}^{\top}{\bf u}=\sup_{\boldsymbol{\lambda}\in\Lambda}\boldsymbol{\lambda}^{\top}{\bf u}	
\quad {\rm and} \quad
{\bf q}^{\top}{\bf u}=\inf_{\boldsymbol{\lambda}\in\Lambda}\boldsymbol{\lambda}^{\top}{\bf u}.
\end{equation} 
Employing Definition \ref{def:sure-loss}, to say that ${\bf u}$ incurs sure loss on $\Lambda$ relative to  $\mathcal{V}$ means that 
\begin{equation}\label{eq:sure}
\sup_{\boldsymbol{\lambda}\in\Lambda}\boldsymbol{\lambda}^{\top}{\bf u}<\inf\mathcal{P}_{\mathcal{V}}
\quad {\rm or}
\quad 
\inf_{\boldsymbol{\lambda}\in\Lambda}\boldsymbol{\lambda}^{\top}{\bf u}>
\sup\mathcal{P}_{\mathcal{V}}.
\end{equation}
On the other hand, to say that for every ${\bf v}\in \mathcal{V}$, $\left({\bf u},{\bf v}\right)$ induces Simpson's paradox in $\left({\bf p},{\bf q}\right)$ means that 
\begin{equation}\label{eq:simp}
{\bf p}^{\top}{\bf u}<\inf_{{\bf v}\in\mathcal{V}}{\bf q}^{\top}{\bf v} \quad {\rm or} \quad {\bf q}^{\top}{\bf u}>\sup_{{\bf v}\in\mathcal{V}}{\bf p}^{\top}{\bf v}.
\end{equation}
By (\ref{eq:pv})-(\ref{eq:pu}), conditions (\ref{eq:sure}) and (\ref{eq:simp}) are trivially the same, and hence the theorem.
\end{proof}

We remark that, in Definition \ref{def:sure-loss}, sure loss is defined with respect to a single conditioning rule because the prior/marginal lower and upper probabilities $\lp$ and $\up$ are treated as given. Such is not the case with the sure loss concept in Theorem \ref{thm:simpson-sure-loss}. This is why we must first define a set of  desirable rules ${\bf v} \in {\mathcal{V}}$ which implies a prior/marginal probability interval, before discussing the behavior of the other aggregation rule ${\bf u}$ relative to it.  One can check that the relationship between ${\bf u}$ and ${\bf v}$ is reciprocal, that is, if ${\bf u}$ induces sure loss relative to ${\bf v}$, then  ${\bf v}$ induces sure loss relative to ${\bf u}$. Thus, we can talk about an \emph{aggregation scheme} as an ordered pair of  rules $\left({\bf u},{\bf v}\right) \in K\text{-simplex}^{2}$, and its characteristics as whether it incurs sure loss with respect to itself,  whether it induces the paradox in $\left({\bf p},{\bf q}\right)$, and so on. 

Indeed, this is the case with respect to the \emph{atomic lower and upper probability} (ALUP) model of \cite{herron1997divisive}. A set of probabilities $\Pi_{\left({\bf p},{\bf q}\right)}$ is an ALUP generated by $\left({\bf p},{\bf q}\right)$, where ${\bf p},{\bf q} \in [0,1]^{K}$, if
\begin{equation}\label{eq:alup}
\Pi_{\left({\bf p},{\bf q}\right)}=\left\{ \boldsymbol{\pi}\in {\mathcal S}_K:\sup\pi_{k}=p_{k}, \ \inf\pi_{k}=q_{k}\right\} 
\end{equation}
A connection between ALUP model and Simpson's paradox is made below.

\begin{lemma}[\textit{ALUP models}]
If an aggregation scheme $\left({\bf u},{\bf v}\right)$ induces Simpson's paradox in $\left({\bf p}, {\bf q}\right)$,  it incurs sure loss with respect to itself on the ALUP model $\Pi_{\left({\bf p},{\bf q}\right)}$, as defined in (\ref{eq:alup}).
\end{lemma}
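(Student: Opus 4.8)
The plan is to reduce the claim to a set-inclusion argument that lets me reuse the corner evaluations already established in the proof of Theorem~\ref{thm:simpson-sure-loss}. First I would unwind what ``sure loss with respect to itself'' means for the scheme $\left({\bf u},{\bf v}\right)$ on the model $\Pi_{\left({\bf p},{\bf q}\right)}$: I take the singleton desirable set $\mathcal{V}=\{{\bf v}\}$, so that the prior interval generated by the desirable rule is $[\inf_{\boldsymbol{\pi}\in\Pi}\boldsymbol{\pi}^\top{\bf v},\ \sup_{\boldsymbol{\pi}\in\Pi}\boldsymbol{\pi}^\top{\bf v}]$ and the posterior interval supplied by ${\bf u}$ is $[\inf_{\boldsymbol{\pi}\in\Pi}\boldsymbol{\pi}^\top{\bf u},\ \sup_{\boldsymbol{\pi}\in\Pi}\boldsymbol{\pi}^\top{\bf u}]$, where now every supremum and infimum is taken over $\Pi=\Pi_{\left({\bf p},{\bf q}\right)}$ rather than over the box $\Lambda$ of Theorem~\ref{thm:simpson-sure-loss}.

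The single structural fact I would record is the inclusion $\Pi_{\left({\bf p},{\bf q}\right)}\subseteq\Lambda$: every $\boldsymbol{\pi}\in\Pi_{\left({\bf p},{\bf q}\right)}$ satisfies $q_k\le\pi_k\le p_k$ by (\ref{eq:alup}), hence lies in the box $\Lambda=\prod_k[q_k,p_k]$. The value of this inclusion is purely one of monotonicity: shrinking the feasible set can only lower a supremum and raise an infimum, so $\sup_{\boldsymbol{\pi}\in\Pi}\boldsymbol{\pi}^\top{\bf w}\le\sup_{\boldsymbol{\lambda}\in\Lambda}\boldsymbol{\lambda}^\top{\bf w}$ and $\inf_{\boldsymbol{\pi}\in\Pi}\boldsymbol{\pi}^\top{\bf w}\ge\inf_{\boldsymbol{\lambda}\in\Lambda}\boldsymbol{\lambda}^\top{\bf w}$ for any ${\bf w}$. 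Because ${\bf u},{\bf v}\ge0$, the box extrema are attained at the corners exactly as in (\ref{eq:pu}), namely $\sup_{\boldsymbol{\lambda}\in\Lambda}\boldsymbol{\lambda}^\top{\bf w}={\bf p}^\top{\bf w}$ and $\inf_{\boldsymbol{\lambda}\in\Lambda}\boldsymbol{\lambda}^\top{\bf w}={\bf q}^\top{\bf w}$.

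Now I would combine these with the Simpson's-paradox hypothesis, which is condition (\ref{eq:simp}) specialized to $\mathcal{V}=\{{\bf v}\}$, i.e. ${\bf p}^\top{\bf u}<{\bf q}^\top{\bf v}$ or ${\bf q}^\top{\bf u}>{\bf p}^\top{\bf v}$. In the first case,
\[
\sup_{\boldsymbol{\pi}\in\Pi}\boldsymbol{\pi}^\top{\bf u}\ \le\ {\bf p}^\top{\bf u}\ <\ {\bf q}^\top{\bf v}\ \le\ \inf_{\boldsymbol{\pi}\in\Pi}\boldsymbol{\pi}^\top{\bf v},
\]
which is precisely sure loss in the form (\ref{eq:sure-gain}) on $\Pi$; in the second case,
\[
\inf_{\boldsymbol{\pi}\in\Pi}\boldsymbol{\pi}^\top{\bf u}\ \ge\ {\bf q}^\top{\bf u}\ >\ {\bf p}^\top{\bf v}\ \ge\ \sup_{\boldsymbol{\pi}\in\Pi}\boldsymbol{\pi}^\top{\bf v},
\]
which is sure loss in the form (\ref{eq:sure-loss}). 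Either way the scheme incurs sure loss on $\Pi_{\left({\bf p},{\bf q}\right)}$, and the reciprocity noted after Theorem~\ref{thm:simpson-sure-loss} is what entitles me to phrase this ``with respect to itself''.

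The one genuinely delicate point, which I expect to be the main obstacle, is that one cannot simply transport the corner identities $\sup_{\boldsymbol{\pi}\in\Pi}\boldsymbol{\pi}^\top{\bf u}={\bf p}^\top{\bf u}$ from the box to the ALUP, since ${\bf p}$ and ${\bf q}$ need not be probability vectors and so the extrema over the simplex-constrained set $\Pi$ may lie strictly inside those over $\Lambda$. The argument circumvents this by invoking only the one-sided inclusion bounds, whose inequalities happen to point in exactly the direction that preserves---indeed widens---the strict separation furnished by the paradox; this is also why the implication is one-directional and need not reverse on $\Pi$. I would finally note that nonemptiness of $\Pi_{\left({\bf p},{\bf q}\right)}$, implicit in calling $\left({\bf p},{\bf q}\right)$ an admissible ALUP generator, guarantees the suprema and infima are finite so that the two displayed chains are meaningful.
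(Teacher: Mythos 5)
Your proposal is correct and follows essentially the same route as the paper's own proof: both rest on the inclusion $\Pi_{\left({\bf p},{\bf q}\right)}\subseteq\Lambda$ together with the monotonicity of suprema and infima under set inclusion, transporting the separation established in Theorem~\ref{thm:simpson-sure-loss} from the box $\Lambda$ to the ALUP model. The only differences are cosmetic: the paper argues one direction without loss of generality where you write out both cases, and your explicit caution that the corner identities hold only as one-sided inequalities on $\Pi$ is exactly the (implicit) content of the paper's argument.
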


\begin{proof}
Without loss of generality, suppose an aggregation scheme $\left({\bf u},{\bf v}\right)$ induces Simpson's paradox in $\left({\bf p},{\bf q}\right)$ in the form of ${\bf p}^{\top}{\bf u}=\sup_{\boldsymbol{\lambda}\in\Lambda}\boldsymbol{\lambda}^{\top}{\bf u}<\inf_{\boldsymbol{\lambda}\in\Lambda}\boldsymbol{\lambda}^{\top}{\bf v}=\boldsymbol{q}^{\top}{\bf v}$.
But since $\Pi_{\left({\bf p},{\bf q}\right)}$ is a closed and convex subset of $\Lambda$, we have $\sup_{\boldsymbol{\lambda}\in\Lambda}\boldsymbol{\lambda}^{\top}{\bf u}\ge\sup_{\boldsymbol{\pi}\in \Pi_{(p, q)}}\boldsymbol\pi^{\top}{\bf u}$ and $\inf_{\boldsymbol{\lambda}\in\Lambda}\boldsymbol{\lambda}^{\top}{\bf v}\le\inf_{\boldsymbol{\pi}\in\Pi_{(p, q)}}\boldsymbol{\pi}^{\top}{\bf v}$, hence the ``only if'' part of Theorem \ref{thm:simpson-sure-loss} still holds.
\end{proof}

\subsection{Implication on inference}

In Example \ref{example:simpson}, the description of the model is precise with the conditional values ${\bf p}$ and ${\bf q}$, as well as the marginal values $\bar{p}_{\text{obs}}$ and $\bar{q}_{\text{obs}}$. The model is imprecise, and in fact completely vacuous, on the aggregation rules $\left({\bf u}_{\text{obs}}, {\bf v}_{\text{obs}}\right)$ which gave rise to the observed marginal values. 

In order for the observed marginal probabilities $\bar{p}_{\text{obs}}$ and $\bar{q}_{\text{obs}}$ to yield a meaningful comparison, we must have clear answers to the following two questions regarding ${\bf u}_{\text{obs}}$ and ${\bf v}_{\text{obs}}$:
\begin{enumerate}
	\item Are they equal?
	\item What is the mutual value ${\bf v}$ they both should be equal to?
\end{enumerate}
An affirmative answer to the first question ensures that $\bar{p}_{\text{obs}}$ and $\bar{q}_{\text{obs}}$ are at least on a comparable footing. For example, for the evaluation of an causal effect of $Z$ on $Y$, regardless of the population of interest, it must be ensured that no confounding between the covariate $U$ and the propensity of assignment took place, i.e., $U \perp Z$. That is why Simpson's paradox is a sanity check for any apparent causal relationship, as the paradox constitutes sufficient (but not necessary) evidence there is non-negligible confounding between $U$ and $Z$, a telltale sign that one is comparing apples with oranges. Much classic and contemporary literature on causal inference sensitivity analysis, e.g., \cite{cornfield1959smoking, ding2016sensitivity}, hinge on establishing deterministic bounds to exclude scenarios that are in essence Simpson's paradoxes, as well as quantifying the probability of population-level paradox given observed paradox in the sample, e.g., \cite{pavlides2009likely}. If the assignment $Z$ cannot be controlled in one or both treatment arms, the aggregation rule is no longer chosen by the investigator/statistician but rather left self-selected, in all or in part by the observational mechanism. In particular, if arbitrary confounding can be present in both treatment arms, ${\bf u}$ and ${\bf v}$ can take up any value in the $K$-simplex. It is also entirely possible that controlled randomization or weighting is available in only one of the treatment arms, or on a subset of levels of $U$, reflecting an aggregation rule as a mixture of intentional choice and self-selection. 

It is also crucial that the ideal aggregation rule ${\bf v}$, the mutual value for both ${\bf u}_{\text{obs}}$ and ${\bf v}_{\text{obs}}$, is a conscious choice made by the investigator to reflect the scientific question of interest. Two typical situations that give rise to natural choices of ${\bf v}$ are:
\begin{itemize}
	\item to make inference about population average treatment effect, choose ${\bf v}$ to be the oracle probability distribution of patients' covariates in the population;
	\item to make inference about a particular patient's treatment effect, choose
	\[ {\bf v} = \left(\begin{array}{ccccccc}
0 & \cdots & 0 & 1_{\left(U_{i}=k\right)} & 0 & \cdots & 0\end{array}\right)'\]
the indicator vector matching the patient's covariate value $U_i$ with its level $k$.
\end{itemize} 
One can devise a range of choices of ${\bf v}$ to reflect any amount of intermediate pooling within what is deemed as the relevant subpopulation. As discussed in \cite{liu2014comment,liu2016there}, what defines the game of \emph{individualized inference} is picking the ${\bf v}$ at the appropriate resolution level while subject to the tradeoff between population relevance and estimation robustness.

Choosing the right ${\bf v}$ and enforcing ${\bf u}_{\text{obs}} = {\bf v}_{\text{obs}} = {\bf v}$ is not merely a mathematical decision to be made on paper, but rather entails action in a real-life observational environment, one that likely involves the physical activities of stratification and randomization such as controlled experiments and survey designs. Only through doing so can we make sure the de-facto aggregation rules are equal to the ideal rule, or equivalently that we know executable ways to adjust for the differences between these quantities should there be any, e.g., through retrospective weighting.  Failure to acknowledge the distinction and potential differences among ${\bf v}$, ${\bf u}_{\text{obs}}$, and ${\bf v}_{\text{obs}}$ paves the way not only for Simpson's paradox, but also equivalently for endorsing mythical statistical aggregation rules with the potential to exhibit incoherent behavior, and the worst of all, to mislead ourselves in making the wrong (treatment) decisions, a sure loss in real sense.

\section{Behavior of updating rules: some characterizations}\label{sec:results}

This section presents some results on the behavior of the three updating rules discussed in this paper. We begin with the intuitive ones and progress towards those that are perhaps surprising.  Unless otherwise noted,  this section assumes that $\lp$ is a Choquet capacity of order 2 on $\mathscr{B}\left(\Omega\right)$, and $\Pi = \{P: P \ge \lp \}$, the closed and convex set of probabilities compatible with it. Recall $\lpb$, $\lpd$, and $\lpg$ are the conditional lower probability functions according to the generalized Bayes (Def. \ref{def:gen_bayes}), Dempster's (Def. \ref{def:dempster}) and the Geometric rules (Def. \ref{def:geometric}) respectively.

\subsection{Generalized Bayes rule cannot contract nor induce sure loss}

\begin{lemma}\label{lemma:inf-of-inf}
	Let $\mathcal{B} = \{B_1, B_2, ...\}$ be a measurable and denumerable partition of $\Omega$. For any  $A \in \mathscr{B}\left(\Omega\right)$, we have
	\begin{eqnarray*}
		\inf_{Z\in\mathcal{B}}\lpb\left(A\mid B_{i}\right)\le\lp\left(A\right), \quad {\rm and} \quad
		\sup_{B_{i}\in\mathcal{B}}\upb\left(A\mid B_{i}\right)\ge\up\left(A\right).
	\end{eqnarray*}
\end{lemma}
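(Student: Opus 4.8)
The plan is to reduce this statement about imprecise conditional probabilities to an elementary fact about each precise member $P\in\Pi$: that an unconditional probability is a convex combination of its conditionals across a partition. First I would fix an arbitrary $P\in\Pi$. Since $\mathcal{B}$ is a measurable partition of $\Omega$ and $P$ is countably additive, the law of total probability gives $P(A)=\sum_i P(A\cap B_i)=\sum_i P(A\mid B_i)P(B_i)$, where the weights $P(B_i)$ are nonnegative and sum to one. Here each $P(B_i)>0$, because the statement presupposes that $\lpb(A\mid B_i)$ is well-defined, which per Definition \ref{def:gen_bayes} requires $\lp(B_i)=\inf_{P\in\Pi}P(B_i)>0$. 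Thus $P(A)$ is a genuine weighted average of the quantities $\{P(A\mid B_i)\}_i$, and is therefore pinned between their infimum and supremum: $\inf_i P(A\mid B_i)\le P(A)\le\sup_i P(A\mid B_i)$.

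Next I would take the infimum over $P\in\Pi$ of the left-hand inequality. Writing $\lpb(A\mid B_i)=\inf_{P\in\Pi}P(A\mid B_i)$ (Definition \ref{def:gen_bayes}) and using that a joint infimum may be evaluated in either order,
\[
\inf_i \lpb(A\mid B_i)=\inf_i\inf_{P\in\Pi}P(A\mid B_i)=\inf_{P\in\Pi}\inf_i P(A\mid B_i)\le\inf_{P\in\Pi}P(A)=\lp(A),
\]
where the single inequality is merely monotonicity of the infimum applied to $\inf_i P(A\mid B_i)\le P(A)$, which holds for every $P$. This establishes the first claim. The second claim follows by the mirror-image argument with suprema, $\sup_i \upb(A\mid B_i)=\sup_{P\in\Pi}\sup_i P(A\mid B_i)\ge\sup_{P\in\Pi}P(A)=\up(A)$; alternatively it can be deduced from the first by applying it to $A^c$ and invoking the conjugacy relations $\upb(A\mid B)=1-\lpb(A^c\mid B)$ and $\up(A)=1-\lp(A^c)$.

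The only real care needed is in the handling of the conditioning events, and this is where I expect the main (albeit modest) obstacle to lie. The argument is clean precisely when every conditional $P(A\mid B_i)$ is defined, i.e.\ when $\lp(B_i)>0$ for all $i$, which I take to be implicit in the lemma's reference to $\lpb(A\mid B_i)$. With a denumerable partition one should also confirm that the total-probability expansion is a bona fide convex combination (countable additivity supplies $\sum_i P(B_i)=1$), so that the averaging bound $\inf_i P(A\mid B_i)\le P(A)\le\sup_i P(A\mid B_i)$ genuinely holds in the infinite case. Beyond this, the proof uses nothing about the order-$2$ (or any) monotonicity of $\lp$: it is purely a statement about the lower and upper envelopes of the precise measures in $\Pi$. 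That is precisely why the generalized Bayes rule, being defined as the exact envelope of precise Bayesian conditioning, can never push the whole conditional interval strictly to one side of the prior interval, which is the content that Lemma~\ref{lemma:inf-of-inf} will later be used to convert into the impossibility of contraction and sure loss.
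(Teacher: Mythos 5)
Your proof is correct, and it reaches the conclusion by a route that differs from the paper's in a meaningful way, even though both rest on the same engine: the law of total probability, which exhibits $P(A)$ as a convex combination of the conditionals $P(A\mid B_i)$ and hence pins it between their infimum and supremum. The paper argues by contradiction: it invokes the closedness of $\Pi$ to produce a measure $P^{(A)}\in\Pi$ that \emph{attains} $\lp(A)$, expands $P^{(A)}(A)$ by total probability, and shows that the assumed strict inequality $\inf_i \lpb(A\mid B_i) > \lp(A)$ forces $\lp(A) > \lp(A)$. Your argument is direct: you apply the averaging bound to an \emph{arbitrary} $P\in\Pi$, then interchange the two infima, $\inf_i \inf_P = \inf_P \inf_i$, and use monotonicity of the infimum. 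What your version buys is economy of hypotheses: you never need an attaining measure, so closedness (or any compactness) of $\Pi$ plays no role, and the lemma is exposed as a fact about bare envelopes of any family of measures. What the paper's version buys is a certain concreteness: by pinning the contradiction to the specific extremal measure $P^{(A)}$, it mirrors directly the betting interpretation of sure loss that the lemma is designed to rule out via Theorem~\ref{thm:genbayes-coherence}. Your handling of the one genuine technical point — that well-definedness of $\lpb(\cdot\mid B_i)$ presupposes $\lp(B_i)>0$, hence $P(B_i)>0$ for every $P\in\Pi$ — matches the convention the paper adopts from Seidenfeld and Wasserman, and is stated more explicitly than in the paper's own proof, where the same assumption is implicit in writing $P^{(A)}(A\mid B_i)$ for all $i$. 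Your two suggested derivations of the supremum half (mirror-image argument, or conjugacy applied to $A^c$) are both valid; the paper takes the mirror-image route.
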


\begin{proof}
	The proof is by contradiction. Assume that $\inf_{B_{i}\in\mathcal{B}}\lpb\left(A\mid B_{i}\right) > \lp\left(A\right)$. For the given $A$, because $\Pi$ is a closed set, there exists a $P^{(A)} \in \Pi$ such that $P^{(A)}\left(A\right) = \lp \left(A\right)$. Note that the superscript in $P^{(A)}$ reminds us that this probability measure can vary with the choice of $A$.  But this does not affect the validity of applying the total probability law under this chosen $P^{(A)}$, which leads to
	 \begin{eqnarray*}
	 	\lp\left(A\right)=P^{(A)}(A)& = &	\sum_{i=1}^{\infty}P^{(A)}\left(A\mid B_{i}\right)P^{(A)}\left(B_{i}\right) \\
	 &\ge & 	\sum_{i=1}^{\infty}\lpb\left(A\mid B_{i}\right)P^{(A)}\left(B_{i}\right) 
	 \ge 	\sum_{i=1}^{\infty}\inf_{B_{i}}\lpb\left(A\mid B_{i}\right)P^{(A)}\left(B_{i}\right)  \\
	& > & 	\sum_{i=1}^{\infty}\lp\left(A\right)P^{(A)}\left(B_{i}\right) 
	 = 	\lp\left(A\right),
	 \end{eqnarray*}
resulting in a contradiction. The same argument applies for the upper probability of $A$, that if $\sup_{B_{i}\in\mathcal{B}}\upb\left(A\mid B_{i}\right) < \up\left(A\right)$, then using
$\up \left(A\right)=\tilde P^{(A)}\left(A\right)$,
\begin{equation*}		\up\left(A\right)\le\sum_{i=1}^{\infty}\upb\left(A\mid B_{i}\right)\tilde P^{(A)}\left(B_{i}\right)<\sum_{i=1}^{\infty}\up\left(A\right)\tilde P^{(A)}\left(B_{i}\right)=\up\left(A\right),
	\end{equation*}
    and hence again a contradiction.
    \end{proof}
	

A direct consequence of Lemma \ref{lemma:inf-of-inf} is the following thorem.
\begin{theorem}\label{thm:genbayes-coherence}
Let $\mathcal{B}$ be a denumerable and measurable partition of $\Omega$, and $\Pi$ be the set of probability measures compatible with $\lp$. For any event $A \in \mathscr{B}(\Omega)$, under the generalized Bayes rule,
\begin{itemize}
	\item $\mathcal{B}$ cannot induce sure loss in $A$,
	\item $\mathcal{B}$ cannot contract $A$.
\end{itemize}
\end{theorem}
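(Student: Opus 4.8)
The plan is to derive both claims directly from Lemma~\ref{lemma:inf-of-inf}, which supplies the two inequalities $\inf_{B\in\mathcal{B}}\lpb(A\mid B)\le\lp(A)$ and $\sup_{B\in\mathcal{B}}\upb(A\mid B)\ge\up(A)$. Once these are in hand, the whole argument is a matter of collating them against the definitions of sure loss and contraction, together with the trivial bound $\lp(A)\le\up(A)$ (since $\inf_{P\in\Pi}P(A)\le\sup_{P\in\Pi}P(A)$). No further analytic work is needed, because Lemma~\ref{lemma:inf-of-inf} has already absorbed it via the law of total probability and the existence of a minimizing measure guaranteed by the closedness of $\Pi$.

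First I would dispatch sure loss. By Definition~\ref{def:sure-loss}, sure loss takes one of two directional forms. For form (\ref{eq:sure-loss}) one would need $\inf_{B\in\mathcal{B}}\lpb(A\mid B)>\up(A)$; but Lemma~\ref{lemma:inf-of-inf} combined with the trivial bound gives $\inf_{B\in\mathcal{B}}\lpb(A\mid B)\le\lp(A)\le\up(A)$, an immediate contradiction. Symmetrically, form (\ref{eq:sure-gain}) would require $\sup_{B\in\mathcal{B}}\upb(A\mid B)<\lp(A)$, whereas the lemma yields $\sup_{B\in\mathcal{B}}\upb(A\mid B)\ge\up(A)\ge\lp(A)$, again a contradiction. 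Hence neither form of sure loss can arise under the generalized Bayes rule.

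Next I would rule out contraction. By the definition accompanying (\ref{eq:contraction}), any contraction (whether strict, or with one outer equality) requires $\lp(A)\le\inf_{B\in\mathcal{B}}\lpb(A\mid B)$ together with $\sup_{B\in\mathcal{B}}\upb(A\mid B)\le\up(A)$, and with at least one of these two outer relations strict. But Lemma~\ref{lemma:inf-of-inf} forces the reverse weak inequalities $\inf_{B\in\mathcal{B}}\lpb(A\mid B)\le\lp(A)$ and $\sup_{B\in\mathcal{B}}\upb(A\mid B)\ge\up(A)$. Pinching each pair, both outer relations must hold with equality, contradicting the requirement that at least one be strict; so contraction is impossible.

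The argument has essentially no obstacle beyond careful bookkeeping: the only points demanding vigilance are to cover both directional forms of sure loss and both the strict and the single-equality versions of contraction, and to invoke the trivial bound $\lp(A)\le\up(A)$ to bridge the lower-probability inequality with the upper-probability one in the two sure-loss cases. Since Lemma~\ref{lemma:inf-of-inf} does all the substantive work, the theorem follows as a direct corollary.
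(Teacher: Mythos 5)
Your proposal is correct and follows essentially the same route as the paper, which likewise derives the theorem as a direct consequence of Lemma~\ref{lemma:inf-of-inf}; indeed, the paper states it as such without even spelling out the case analysis. Your explicit bookkeeping of the two sure-loss directions and the strict/non-strict contraction cases simply fills in details the paper leaves to the reader.
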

The first part of Theorem \ref{thm:genbayes-coherence}, that the generalized Bayes rule avoids sure loss, is well-known in the literature and is the very reason that many authors such as \cite{walley1991statistical} and \cite{jaffray1992bayesian} consider it to be the sole choice as coherent updating rule, or the ``conditioning proper''. However, as we will show next, the generalized Bayes rule is also the most prone to dilation.

\subsection{Generalized Bayes rule produces a superset of probability measures}

\begin{lemma}[\textit{Generalized Bayes rule produces the widest intervals}]\label{lemma:gen_Bayes_widest}
For all $A, B \in \mathscr{B}\left(\Omega\right)$ such that the following quantities are defined, we have
\begin{equation}\label{eq:wider-geom}
\lpb\left(A\mid B\right)\le  \lpd\left(A\mid B\right) \le \upd\left(A\mid B\right)  \le\upb\left(A\mid B\right)
\end{equation}
and
\begin{equation}\label{eq:wider-genbayes}
\lpb\left(A\mid B\right) \le  \lpg\left(A\mid B\right) \le \upg\left(A\mid B\right)  \le \upb\left(A\mid B\right).
\end{equation}
\end{lemma}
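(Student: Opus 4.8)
The plan is to reduce both chains (\ref{eq:wider-geom}) and (\ref{eq:wider-genbayes}) to two ``direct'' comparisons — one bounding Dempster's upper probability by the generalized Bayes upper probability, one bounding the generalized Bayes lower probability by the Geometric lower probability — and to recover everything else by conjugacy bookkeeping. First I would record the conjugacy identities. For the generalized Bayes rule a one-line computation gives
\[
\upb(A^c\mid B)=\sup_{P\in\Pi}\frac{P(B)-P(A\cap B)}{P(B)}=1-\inf_{P\in\Pi}\frac{P(A\cap B)}{P(B)}=1-\lpb(A\mid B),
\]
so $\lpb$ and $\upb$ are conjugate, while $\lpd(A\mid B)=1-\upd(A^c\mid B)$ and $\upg(A\mid B)=1-\lpg(A^c\mid B)$ hold by Definitions \ref{def:dempster} and \ref{def:geometric}. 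Throughout I assume $\lp(B)>0$, the strongest of the three positivity requirements, which forces $P(B)>0$ for every $P\in\Pi$ and makes $\inf_{P}P(B)=\lp(B)$ and $\sup_{P}P(B)=\up(B)$ strictly positive, so every ratio below is well-defined.

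The crux is a single-measure extraction argument exploiting the monotonicity of $t\mapsto c/t$. Following the convention of Lemma \ref{lemma:inf-of-inf} (attainment on the closed set $\Pi$), let $P^{*}\in\Pi$ attain $\sup_{P}P(A\cap B)$. Since $P^{*}(B)\le\sup_{P}P(B)$, shrinking the denominator gives
\[
\upd(A\mid B)=\frac{P^{*}(A\cap B)}{\sup_{P}P(B)}\le\frac{P^{*}(A\cap B)}{P^{*}(B)}\le\sup_{P\in\Pi}\frac{P(A\cap B)}{P(B)}=\upb(A\mid B).
\]
Symmetrically, letting $P_{*}\in\Pi$ attain $\inf_{P}P(A\cap B)$ and using $P_{*}(B)\ge\inf_{P}P(B)$,
\[
\lpg(A\mid B)=\frac{P_{*}(A\cap B)}{\inf_{P}P(B)}\ge\frac{P_{*}(A\cap B)}{P_{*}(B)}\ge\inf_{P\in\Pi}\frac{P(A\cap B)}{P(B)}=\lpb(A\mid B).
\]
These are precisely the outer inequality $\upd(A\mid B)\le\upb(A\mid B)$ of (\ref{eq:wider-geom}) and the outer inequality $\lpg(A\mid B)\ge\lpb(A\mid B)$ of (\ref{eq:wider-genbayes}).

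The two remaining outer inequalities follow by applying the above to $A^{c}$ and invoking conjugacy: from $\upd(A^{c}\mid B)\le\upb(A^{c}\mid B)$ I obtain $\lpd(A\mid B)=1-\upd(A^{c}\mid B)\ge1-\upb(A^{c}\mid B)=\lpb(A\mid B)$, and from $\lpg(A^{c}\mid B)\ge\lpb(A^{c}\mid B)$ I obtain $\upg(A\mid B)=1-\lpg(A^{c}\mid B)\le1-\lpb(A^{c}\mid B)=\upb(A\mid B)$. Finally, the inner inequalities $\lpd(A\mid B)\le\upd(A\mid B)$ and $\lpg(A\mid B)\le\upg(A\mid B)$ reduce, after clearing the positive denominators, to $\up(B)\le\sup_{P}P(A\cap B)+\sup_{P}P(A^{c}\cap B)$ and $\inf_{P}P(A\cap B)+\inf_{P}P(A^{c}\cap B)\le\lp(B)$ respectively; both are immediate from the additivity $P(B)=P(A\cap B)+P(A^{c}\cap B)$ of each individual $P$ together with subadditivity of $\sup$ and superadditivity of $\inf$.

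The only genuine subtlety — and hence the step to treat with care — is justifying the extremizing measures $P^{*}$ and $P_{*}$. I would handle this exactly as in Lemma \ref{lemma:inf-of-inf}, using that $\Pi=\{P:P\ge\lp\}$ is closed (indeed weakly compact) for a Choquet capacity of order $2$, so that the suprema and infima of $P\mapsto P(A\cap B)$ are attained. Should attainment be in doubt for a particular Borel set, the same inequalities go through verbatim after replacing $P^{*}$ and $P_{*}$ by $\varepsilon$-maximizing and $\varepsilon$-minimizing sequences and letting $\varepsilon\to0$, since each intermediate inequality is preserved under limits.
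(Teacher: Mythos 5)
Your proof is correct and follows essentially the same route as the paper's: the two key outer inequalities $\upd(A\mid B)\le\upb(A\mid B)$ and $\lpg(A\mid B)\ge\lpb(A\mid B)$ (ratio of suprema versus supremum of ratios, and dually for infima), with the remaining outer inequalities recovered by conjugacy. The paper simply asserts those two comparisons in one line each, whereas you justify them via extremizing (or $\varepsilon$-extremizing) measures and also verify the inner inequalities explicitly; these are worthwhile details but not a different argument.
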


That is, the conditional probability intervals resulting from Dempster's rule and the Geometric rule are always contained within those of the generalized Bayes rule. The fact that Dempster's rule produces shorter posterior intervals than that of the generalized Bayesian rule was discussed in \cite{dempster1967upper} and \cite{kyburg1987bayesian}. Here we supply a straightforward proof that applies to both sharper rules.

\begin{proof}
For Dempster's rule, the conditional plausibility function satisfies
\begin{equation*}
\upd \left(A\mid B\right) = \frac{\sup_{P\in\Pi}P\left(A\cap B\right)}{\sup_{P\in\Pi}P\left(B\right)} \le  \sup_{P\in\Pi} \frac{P\left(A\cap B\right)}{P\left(B\right)}
  = \upb \left( A\mid B\right)
\end{equation*}
and by conjugacy, also $\lpd \left(A\mid B\right) \ge \lpb \left(A\mid B\right) $. Similarly for the Geometric rule, the conditional lower probability function satisfies
\begin{equation*}
\lpg \left(A\mid B\right) = \frac{\inf_{P\in\Pi}P\left(A\cap B\right)}{\inf_{P\in\Pi}P\left(B\right)} \ge  \inf_{P\in\Pi} \frac{P\left(A\cap B\right)}{P\left(B\right)}
  = \lpb \left( A\mid B\right)
\end{equation*}
and by conjugacy, also $\upg \left(A\mid B\right) \le \upb \left(A\mid B\right) $.
\end{proof}

\begin{theorem}[\textit{Generalized Bayes rule}]\label{thm:gen_Bayes_superset}
Let $B \in \mathscr{B}\left(\Omega\right)$ be such that $\lp(B) > 0$. Denote sets of posterior probability measures $\Pi_{\mathfrak{B}}=\left\{ P:P\ge\lpb \left(\cdot\mid B\right)\right\}$, $\Pi_{\mathfrak{D}}=\left\{ P:P\ge\lpd \left(\cdot\mid B\right) \right\}$ and $\Pi_{\mathfrak{G}}=\left\{ P:P\ge\lpg\left(\cdot\mid B\right)\right\}$. Then,
\begin{equation}
	\Pi_{\mathfrak{G}}\subseteq\Pi_{\mathfrak{B}} \qquad \text{and}  \qquad 	\Pi_{\mathfrak{D}}\subseteq\Pi_{\mathfrak{B}}. 
\end{equation}
\end{theorem}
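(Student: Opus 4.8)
The plan is to obtain this theorem as an essentially immediate consequence of Lemma~\ref{lemma:gen_Bayes_widest}, which has already established the pointwise ordering of the three conditional lower probability functions. The only conceptual ingredient I need to add is the elementary but crucial observation that the map sending a lower-probability set function to its set of compatible measures is \emph{antitone}: if $f$ and $g$ are two set functions on $\mathscr{B}(\Omega)$ with $f(A) \le g(A)$ for every $A$, then the constraint $P \ge g$ is more demanding than $P \ge f$, so that $\{P : P \ge g\} \subseteq \{P : P \ge f\}$.

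The steps are as follows. First I would recall from (\ref{eq:wider-geom}) and (\ref{eq:wider-genbayes}) of Lemma~\ref{lemma:gen_Bayes_widest} that, for every event $A \in \mathscr{B}(\Omega)$ for which the quantities are defined,
\begin{equation*}
\lpb(A \mid B) \le \lpd(A \mid B) \qquad \text{and} \qquad \lpb(A \mid B) \le \lpg(A \mid B).
\end{equation*}
These say precisely that, as set functions on $\mathscr{B}(\Omega)$, $\lpb(\cdot \mid B) \le \lpd(\cdot \mid B)$ and $\lpb(\cdot \mid B) \le \lpg(\cdot \mid B)$ hold pointwise. Second, I would take any $P \in \Pi_{\mathfrak{D}}$, so that $P(A) \ge \lpd(A \mid B)$ for all $A$; chaining with the first inequality gives $P(A) \ge \lpd(A \mid B) \ge \lpb(A \mid B)$ for every $A$, which is exactly the statement $P \ge \lpb(\cdot \mid B)$, i.e.\ $P \in \Pi_{\mathfrak{B}}$. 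This yields $\Pi_{\mathfrak{D}} \subseteq \Pi_{\mathfrak{B}}$. The identical argument applied to the second inequality, using $\lpg(\cdot \mid B)$ in place of $\lpd(\cdot \mid B)$, gives $\Pi_{\mathfrak{G}} \subseteq \Pi_{\mathfrak{B}}$.

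I do not expect a genuine obstacle here, since the substantive work is carried by Lemma~\ref{lemma:gen_Bayes_widest}; the theorem is in effect a corollary. The only points deserving care are bookkeeping ones: verifying that the domination $\lpb \le \lpd$ (resp.\ $\lpb \le \lpg$) holds \emph{simultaneously} over all events $A$, rather than for a single $A$, so that it lifts to a comparison of the full lower-envelope set functions; and keeping the direction of the inclusion straight, namely that a larger lower bound carves out a smaller compatible set. The hypothesis $\lp(B) > 0$ is needed only to ensure that the three conditional functions $\lpb(\cdot \mid B)$, $\lpd(\cdot \mid B)$, and $\lpg(\cdot \mid B)$ are well-defined, which is what lets Lemma~\ref{lemma:gen_Bayes_widest} apply in the first place.
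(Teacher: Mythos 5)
Your proof is correct and matches the paper's own treatment: the paper likewise obtains Theorem~\ref{thm:gen_Bayes_superset} as a direct consequence of Lemma~\ref{lemma:gen_Bayes_widest}, with the inclusion following from exactly the antitonicity you spell out (a pointwise larger lower bound carves out a smaller set of dominating measures). The only cosmetic difference is that the paper also remarks that $\Pi_{\mathfrak{B}}$, $\Pi_{\mathfrak{D}}$, $\Pi_{\mathfrak{G}}$ are closed and convex, a fact your argument shows is not actually needed for the inclusions themselves.
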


Theorem \ref{thm:gen_Bayes_superset} is a direct consequence of Lemma \ref{lemma:gen_Bayes_widest}, noting that $\Pi_{\mathfrak{G}}$, $\Pi_{\mathfrak{B}}$ and $\Pi_{\mathfrak{D}}$ are all convex and closed. Two more consequences of Lemma \ref{lemma:gen_Bayes_widest} are stated below; in particular, Examples~\ref{example:prisoner} and \ref{example:election} are respective embodiments of the two corollaries below.

\begin{corollary}
If $\mathcal{B}$ incurs sure loss in $A$ under Dempster's rule and sure gain under the Geometric rule, or vice versa, then $\mathcal{B}$ strictly dilates $A$ under generalized Bayesian rule.
\end{corollary}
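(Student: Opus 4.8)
The plan is to read off both strict outer inequalities of the dilation definition (\ref{eq:dilation}) directly from the interval-nesting Lemma~\ref{lemma:gen_Bayes_widest}, which guarantees that for every $B\in\mathcal{B}$ the generalized Bayes interval $[\lpb(A\mid B),\upb(A\mid B)]$ contains both the Dempster interval and the Geometric interval. The governing idea is that one of the two hypothesized phenomena forces an endpoint of its (narrower) interval strictly above $\up(A)$, while the other forces an endpoint strictly below $\lp(A)$; by containment these displacements are inherited by the wider generalized Bayes interval, which is exactly what strict dilation asserts. No probabilistic machinery beyond Lemma~\ref{lemma:gen_Bayes_widest} and the trivial $\lp(A)\le\up(A)$ is needed.

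Concretely, I would first treat the case where $\mathcal{B}$ incurs sure loss in $A$ under Dempster's rule, i.e. $\inf_{B\in\mathcal{B}}\lpd(A\mid B)>\up(A)$ in the form (\ref{eq:sure-loss}), and sure gain under the Geometric rule, i.e. $\sup_{B\in\mathcal{B}}\upg(A\mid B)<\lp(A)$ in the form (\ref{eq:sure-gain}). For the right-hand inequality of (\ref{eq:dilation}), I use the chain $\lpd(A\mid B)\le\upd(A\mid B)\le\upb(A\mid B)$ from (\ref{eq:wider-geom}), so that $\lpd(A\mid B)\le\upb(A\mid B)$ holds pointwise; passing to infima yields $\inf_{B\in\mathcal{B}}\upb(A\mid B)\ge\inf_{B\in\mathcal{B}}\lpd(A\mid B)>\up(A)$. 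For the left-hand inequality, I use $\lpb(A\mid B)\le\lpg(A\mid B)\le\upg(A\mid B)$ from (\ref{eq:wider-genbayes}), so $\lpb(A\mid B)\le\upg(A\mid B)$ pointwise; passing to suprema yields $\sup_{B\in\mathcal{B}}\lpb(A\mid B)\le\sup_{B\in\mathcal{B}}\upg(A\mid B)<\lp(A)$. Combined with $\lp(A)\le\up(A)$, these assemble into $\sup_{B\in\mathcal{B}}\lpb(A\mid B)<\lp(A)\le\up(A)<\inf_{B\in\mathcal{B}}\upb(A\mid B)$, precisely strict dilation under the generalized Bayes rule.

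The ``vice versa'' case, sure gain under Dempster's rule together with sure loss under the Geometric rule, is handled by the mirror-image argument. Now $\lpb(A\mid B)\le\lpd(A\mid B)\le\upd(A\mid B)$ from (\ref{eq:wider-geom}) supplies the left inequality via $\sup_{B\in\mathcal{B}}\lpb(A\mid B)\le\sup_{B\in\mathcal{B}}\upd(A\mid B)<\lp(A)$, while $\lpg(A\mid B)\le\upg(A\mid B)\le\upb(A\mid B)$ from (\ref{eq:wider-genbayes}) supplies the right inequality via $\inf_{B\in\mathcal{B}}\upb(A\mid B)\ge\inf_{B\in\mathcal{B}}\lpg(A\mid B)>\up(A)$.

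The argument is short because Lemma~\ref{lemma:gen_Bayes_widest} does the heavy lifting; the only real care needed, and the one place to slip, is bookkeeping: matching the directional form of each hypothesis to the correct endpoint of the generalized Bayes interval. Sure loss in the form (\ref{eq:sure-loss}) pushes a lower endpoint strictly above $\up(A)$ and therefore bounds $\inf_{B\in\mathcal{B}}\upb(A\mid B)$ from below, whereas sure gain in the form (\ref{eq:sure-gain}) pushes an upper endpoint strictly below $\lp(A)$ and therefore bounds $\sup_{B\in\mathcal{B}}\lpb(A\mid B)$ from above; one must also verify that the $\inf$/$\sup$ passages preserve the inequality directions. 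There is no genuine analytic obstacle. The essential content is simply that the two hypothesized phenomena point in opposite directions, so that the nesting of intervals drives both endpoints of the generalized Bayes interval outward at once.
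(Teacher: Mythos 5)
Your proof is correct and takes exactly the route the paper intends: the corollary is presented there as a direct consequence of Lemma~\ref{lemma:gen_Bayes_widest}, and your argument simply makes that implication explicit, transferring the outward displacements of the narrower Dempster and Geometric intervals to the containing generalized Bayes interval with the $\inf$/$\sup$ monotonicity bookkeeping done correctly in both directional cases.
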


\begin{corollary}\label{coro:gen_Bayes_necessary}
	If $\mathcal{B}$ (strictly) dilates $A$ under either Dempster's rule or the Geometric rule, then $\mathcal{B}$ (strictly) dilates $A$ under generalized Bayesian rule.
\end{corollary}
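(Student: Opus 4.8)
The plan is to obtain this corollary as a direct transport of the pointwise interval nesting in Lemma~\ref{lemma:gen_Bayes_widest} through the (order-preserving) supremum and infimum operators, handling the two hypotheses---dilation under Dempster's rule and under the Geometric rule---in parallel, since the two chains of inequalities in Lemma~\ref{lemma:gen_Bayes_widest} play symmetric roles. First I would record the two elementary consequences of pointwise dominance. Because $\lpb(A\mid B)\le\lpd(A\mid B)$ for every $B\in\mathcal{B}$ on which the quantities are defined, and because $f\le g$ pointwise forces $\sup f\le\sup g$, taking suprema gives $\sup_{B\in\mathcal{B}}\lpb(A\mid B)\le\sup_{B\in\mathcal{B}}\lpd(A\mid B)$. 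Dually, $\upd(A\mid B)\le\upb(A\mid B)$ for every $B$, and since $f\le g$ pointwise forces $\inf f\le\inf g$, taking infima gives $\inf_{B\in\mathcal{B}}\upd(A\mid B)\le\inf_{B\in\mathcal{B}}\upb(A\mid B)$.

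Next I would splice these two inequalities into the definition of (strict) dilation (Definition~\ref{def:dilation}). Assuming $\mathcal{B}$ strictly dilates $A$ under Dempster's rule, the two monotonicity facts sandwich the Dempster quantities between the generalized Bayes quantities and thus extend the given strict chain to
\[
\sup_{B\in\mathcal{B}}\lpb(A\mid B)\le\sup_{B\in\mathcal{B}}\lpd(A\mid B)<\lp(A)\le\up(A)<\inf_{B\in\mathcal{B}}\upd(A\mid B)\le\inf_{B\in\mathcal{B}}\upb(A\mid B),
\]
which is exactly strict dilation of $A$ under the generalized Bayes rule. The Geometric case is verbatim the same computation after replacing the first inequality chain of Lemma~\ref{lemma:gen_Bayes_widest} by the second, i.e.\ using $\lpb(A\mid B)\le\lpg(A\mid B)$ and $\upg(A\mid B)\le\upb(A\mid B)$. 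For the non-strict statement, exactly one of the two interior inequalities is weakened to an equality; since the outer inequalities above are weak and the surviving interior inequality stays strict, the displayed chain still certifies that $\mathcal{B}$ dilates $A$ under the generalized Bayes rule, so the parenthetical ``(strictly)'' is covered uniformly by the same argument.

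I do not expect a substantive obstacle here, as the entire content is a monotone lift of Lemma~\ref{lemma:gen_Bayes_widest}. The one point genuinely requiring care is \emph{well-definedness}: dilation under Dempster's rule presupposes only $\up(B)>0$ for all $B\in\mathcal{B}$, whereas the Seidenfeld form of the generalized Bayes rule (Definition~\ref{def:gen_bayes}) asks for $\lp(B)>0$, and for the Geometric rule one already has $\lp(B)>0$. So before applying Lemma~\ref{lemma:gen_Bayes_widest} termwise, I must confirm that the generalized Bayes quantities $\lpb(A\mid B)$ and $\upb(A\mid B)$ entering the conclusion are themselves defined under the same convention used throughout the paper---namely the Fagin--Halpern expressions (\ref{eq:lpb2})--(\ref{eq:upb2}) for Choquet capacities of order~2, which extend the generalized Bayes rule beyond the regime $\lp(B)>0$ (as illustrated in the boxer example). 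Once this is in place, the pointwise containment of Lemma~\ref{lemma:gen_Bayes_widest} holds on every $B\in\mathcal{B}$, and the monotonicity argument delivers the corollary.
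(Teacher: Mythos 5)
Your proof is correct and is essentially the paper's own argument: the paper states this corollary (together with Theorem~\ref{thm:gen_Bayes_superset}) as a direct consequence of Lemma~\ref{lemma:gen_Bayes_widest}, and your monotone lift of the pointwise containments through $\sup$ and $\inf$, with the case analysis for the non-strict variant, is exactly the intended formalization. Your added remark on well-definedness (reconciling the $\up(B)>0$ regime of Dempster's rule with the Fagin--Halpern form (\ref{eq:lpb2})--(\ref{eq:upb2}) of the generalized Bayes rule) is a careful touch that the paper leaves implicit.
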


Theorem 2.1 of \cite{seidenfeld1993dilation} stated that, if dilation occurs with the generalized Bayesian rule, the associated set of probabilities $\Pi$ has a non-empty intersection with that of the independence plane between $A$ and $B$. Thus following Corollary \ref{coro:gen_Bayes_necessary} we have

\begin{corollary}
If $\mathcal{B} = \{B, B^c\}$ dilates $A$ under either Dempster's rule or the Geometric rule, then there exists $ P^{*}\ge\lp$ such that
\begin{equation}
P^{*}\left(A\cap B\right)=P^{*}\left(A\right)P^{*}\left(B\right).
\end{equation}
\end{corollary}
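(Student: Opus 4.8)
The plan is to obtain this corollary as a short two-step deduction, chaining results already available rather than computing anything afresh. First I would invoke Corollary~\ref{coro:gen_Bayes_necessary}: the hypothesis that $\mathcal{B} = \{B, B^c\}$ dilates $A$ under either Dempster's rule or the Geometric rule immediately transfers to dilation of $A$ under the generalized Bayes rule (in the strict or non-strict sense, matching whichever form the hypothesis takes). This reduces the entire problem to the generalized Bayes rule, for which the geometry of dilation is already characterized. Second, I would apply Theorem~2.1 of \cite{seidenfeld1993dilation}, quoted just above the corollary: whenever $\mathcal{B}$ dilates $A$ under the generalized Bayes rule, the compatible set $\Pi = \{P : P \ge \lp\}$ has non-empty intersection with the \emph{independence plane} for $A$ and $B$.

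The only remaining task is to identify that plane with the stated equation. By definition the independence plane is exactly $\{P : P(A\cap B) = P(A)P(B)\}$, so a non-empty intersection with $\Pi$ furnishes a measure $P^{*}$ lying in both sets. Membership $P^{*}\in\Pi$ is precisely the requirement $P^{*}\ge\lp$, and membership in the plane is precisely $P^{*}(A\cap B)=P^{*}(A)P^{*}(B)$, which closes the argument.

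It is worth recording why the Seidenfeld step holds, since that is where the genuine content sits. Consider $f(P) = P(A\mid B) - P(A\mid B^{c})$ on $\Pi$, and note that $P$ makes $A$ and $B$ independent exactly when $f(P)=0$, using that $P(A)$ is a convex combination of $P(A\mid B)$ and $P(A\mid B^{c})$. Dilation supplies, on the upper side, some $P_{1}\in\Pi$ with $P_{1}(A\mid B) > \up(A) \ge P_{1}(A)$, forcing $f(P_{1}) > 0$; and on the lower side some $P_{2}\in\Pi$ with $P_{2}(A\mid B) < \lp(A) \le P_{2}(A)$, forcing $f(P_{2}) < 0$. Convexity of $\Pi$ and continuity of $f$ then yield, by the intermediate value theorem along the segment joining $P_{1}$ and $P_{2}$, a point $P^{*}$ with $f(P^{*})=0$. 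The main obstacle in making this self-contained, rather than citing \cite{seidenfeld1993dilation}, is ensuring $f$ is well-defined and continuous along the whole segment, i.e.\ that $P(B)>0$ and $P(B^{c})>0$ hold throughout; the assumption $\lp(B)>0$ secures the former, and in the genuine dilation regime $\lp(B)<\up(B)<1$ secures the latter, after which the conclusion is routine.
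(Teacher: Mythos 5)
Your proposal is correct and takes essentially the same route as the paper: the corollary is obtained there exactly by chaining Corollary~\ref{coro:gen_Bayes_necessary} (dilation under Dempster's or the Geometric rule transfers to the generalized Bayes rule) with Theorem~2.1 of \cite{seidenfeld1993dilation}, identifying the independence plane with $\{P: P(A\cap B)=P(A)P(B)\}$ and intersecting it with $\Pi$. Your closing intermediate-value-theorem sketch of why the Seidenfeld--Wasserman step holds is sound but is extra content; the paper simply cites that theorem without reproving it.
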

That is, dilation of an event by a binary partition under either Dempster's or the Geometric rules is a necessary condition for the posited set of probabilities to contemplate event independence, since posterior intervals under both rules are contained within the generalized Bayes posterior interval.

\subsection{Generalized Bayes rule and Geometric rule cannot sharpen vacuous prior intervals}

\begin{theorem}[\textit{Sharpening of vacuous intervals}]\label{thm:vacuous-interval}
	Let $\lp$ be such that $\lp \left(A\right) = 0$, $\up \left(A\right) = 1$. For any $B \in \mathscr{B}\left(\Omega\right)$ such that $\lp\left(B\right) > 0$, we have
	\begin{equation} \label{eq:vacuous-geometric}
		 \lpg \left(A\mid B\right)  = 0 \, ,\qquad  \upg \left(A\mid B\right) = 1
	\end{equation}
	and
	\begin{equation} \label{eq:vacuous-genbayes}
		 \lpb \left(A\mid B\right)  = 0 \, ,\qquad  \upb \left(A\mid B\right) = 1.
	\end{equation}
\end{theorem}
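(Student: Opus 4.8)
The plan is to treat the Geometric rule and the generalized Bayes rule separately, exploiting two elementary consequences of the hypotheses. First, $\lp(A)=0$ says $\inf_{P\in\Pi}P(A)=0$, while $\up(A)=1$ is, by conjugacy, the statement $\lp(A^c)=1-\up(A)=0$, i.e.\ $\inf_{P\in\Pi}P(A^c)=0$. Second, and crucially, $\lp(B)>0$ forces $P(B)\ge\lp(B)>0$ for \emph{every} $P\in\Pi$, so all relevant denominators are uniformly bounded away from zero and every conditional quantity below is well defined.

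For the Geometric rule the argument is immediate from monotonicity of the lower probability. Since $A\cap B\subseteq A$ we have $\inf_{P\in\Pi}P(A\cap B)=\lp(A\cap B)\le\lp(A)=0$, and as lower probabilities are nonnegative this infimum equals $0$; dividing by $\inf_{P\in\Pi}P(B)=\lp(B)>0$ gives $\lpg(A\mid B)=0$ by (\ref{eq:lpg}). The same reasoning applied to $A^c$ (using $\lp(A^c)=0$) yields $\lpg(A^c\mid B)=0$, and the conjugate formula (\ref{eq:upg}) then gives $\upg(A\mid B)=1-\lpg(A^c\mid B)=1$. This establishes (\ref{eq:vacuous-geometric}).

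For the generalized Bayes rule I would obtain (\ref{eq:vacuous-genbayes}) for free from the Geometric result by invoking the sandwich of Lemma~\ref{lemma:gen_Bayes_widest}: from (\ref{eq:wider-genbayes}), $\lpb(A\mid B)\le\lpg(A\mid B)=0$ forces $\lpb(A\mid B)=0$ by nonnegativity, and $\upg(A\mid B)=1\le\upb(A\mid B)$ forces $\upb(A\mid B)=1$ since upper probabilities are at most $1$. Alternatively, a self-contained argument makes the role of $\lp(B)>0$ transparent: because $\Pi$ is closed (indeed weakly compact), the infimum $\lp(A)=0$ is attained at some $P^{(A)}\in\Pi$ with $P^{(A)}(A)=0$; then $0\le P^{(A)}(A\cap B)\le P^{(A)}(A)=0$ while $P^{(A)}(B)\ge\lp(B)>0$, so the ratio in (\ref{eq:lpb}) is exactly $0$ and $\lpb(A\mid B)=0$. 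Symmetrically, a minimizer $P^{(A^c)}$ of $P(A^c)$ gives $P^{(A^c)}(A^c\cap B)=0$, so $P^{(A^c)}(A\cap B)=P^{(A^c)}(B)>0$ and the ratio in (\ref{eq:upb}) equals $1$, whence $\upb(A\mid B)=1$.

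The step I expect to require the most care is the denominator control in the generalized Bayes case: making the numerator $P(A\cap B)$ small is automatic from $\lp(A)=0$ and monotonicity, but the infimum of a \emph{ratio} cannot be pushed inside the quotient (this is precisely where $\lpb$ and $\lpg$ part ways), so one must know that $P(B)$ stays bounded below. The hypothesis $\lp(B)>0$ supplies exactly this uniform lower bound. The only remaining technical point is the attainment of the extremizing measures, guaranteed by the closedness and weak compactness of $\Pi$ built into the standing assumption that $\lp$ is a Choquet capacity of order $2$; if one prefers to avoid attainment, the identical estimates go through verbatim along a minimizing sequence, with the bound $P(B)\ge\lp(B)$ again doing all the work.
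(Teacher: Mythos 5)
Your proof is correct and takes essentially the same route as the paper's: monotonicity gives $\lp\left(A\cap B\right)=\lp\left(A^{c}\cap B\right)=0$, so the Geometric formula (\ref{eq:lpg}) and conjugacy yield (\ref{eq:vacuous-geometric}), and the sandwich inequality (\ref{eq:wider-genbayes}) of Lemma~\ref{lemma:gen_Bayes_widest} then delivers (\ref{eq:vacuous-genbayes}) exactly as in the paper. Your alternative self-contained argument for the generalized Bayes case (via a (near-)minimizing measure with $P\left(A\right)$ arbitrarily small and $P\left(B\right)\ge\lp\left(B\right)>0$) is also valid, but it is an optional extra rather than a different approach.
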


\begin{proof}
Notice that if $\lp \left(A\right) = 0$ and  $\up \left(A\right) = 1$, then $\lp\left(A\cap B\right) = \lp\left(A^{c} \cap B\right) =0$ for any $B$. Therefore, by   (\ref{eq:lpg}) we have
\[
\lpg \left(A\mid B\right) = \lp\left(A\cap B\right)/\lp\left(B\right) = 0
\]
and $\upg\left(A\mid B\right)=1-\lpg \left(A^{c}\mid B\right)=1$ provided that the denominator is greater than zero. Furthermore, by (\ref{eq:wider-geom}) we have $\lpb\left(A\mid B\right) \le \lpg \left(A\mid B\right) = 0$ and $\upb\left(A\mid B\right) \ge \upg \left(A\mid B\right) = 1$.
\end{proof}

The liberty to express partially lacking, and vacuous, prior knowledge is a prized advantage of imprecise probability over their precise, or full Bayesian, counterparts. Theorem \ref{thm:vacuous-interval} shows that both the generalized Bayes rule and Geometric rule are incapable of revising a vacuous prior interval to something informative for any possible outcome in the event space, whereas Dempster's rule is capable of such revision, Example~\ref{example:coin} being an instance. This again highlights the non-negligible influence imposed by the rule itself, as well as the difficulty to deliver all desirable properties in one single rule. Avoiding sure loss and being able to update from complete ignorance both seem to be rather basic requirements, but together they are sufficient to eliminate all three rules studied here. The following result perhaps is even more disturbing, because it says that in the world of imprecise probabilities, not only must we live with imperfections, but also accept intrinsic contradictions.

\subsection{The counteractions of Dempster's rule and Geometric rule} \label{subsec:result-counteract}

\begin{theorem}
If $\mathcal{B}=\left\{ B,B^{c}\right\}$ dilates $A$ under the Geometric rule, then it must contract $A$ under Dempster's rule. Similarly, if $\mathcal{B}$ dilates $A$ under Dempster's rule, then it must contract $A$ under the Geometric rule.  In both cases, the contraction is strict if the corresponding dilation is strict. 
\end{theorem}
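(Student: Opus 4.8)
The plan is to reduce everything to the finite algebra generated by $A$ and $B$, whose four atoms are $A\cap B$, $A^c\cap B$, $A\cap B^c$, $A^c\cap B^c$. Every quantity in the statement — the prior bounds $\lp(A),\up(A)$ and the four conditionals under each rule — is determined by $\lp,\up$ on this subalgebra, and the restriction of a $2$-monotone capacity to a subalgebra is again $2$-monotone with $\lp(S)=\inf_{P\in\Pi}P(S)$. Assuming $\lp(B)>0$ and $\lp(B^c)>0$ so that all four conditionals are well-defined (this also gives $\up(B),\up(B^c)>0$), I would write out, using Definitions~\ref{def:dempster} and~\ref{def:geometric} with conjugacy,
\[
\lpg(A\mid B)=\frac{\lp(A\cap B)}{\lp(B)},\qquad \upg(A\mid B)=1-\frac{\lp(A^c\cap B)}{\lp(B)},
\]
\[
\upd(A\mid B)=\frac{\up(A\cap B)}{\up(B)},\qquad \lpd(A\mid B)=1-\frac{\up(A^c\cap B)}{\up(B)},
\]
together with the analogues for $B^c$. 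Clearing denominators turns each inequality in Definition~\ref{def:dilation} and its contraction counterpart into a product inequality. Concretely, $\mathcal{B}$ dilates $A$ under the Geometric rule if and only if $\lp(S\cap T)\le\lp(S)\lp(T)$ holds for all four pairs $(S,T)\in\{A,A^c\}\times\{B,B^c\}$, while $\mathcal{B}$ contracts $A$ under Dempster's rule if and only if $\up(S\cap T)\le\up(S)\up(T)$ holds for the same four pairs; dually, Dempster dilation reverses the four $\up$-inequalities and Geometric contraction reverses the four $\lp$-inequalities.

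The engine of the proof is a one-step lemma linking the lower-level and upper-level product inequalities through complementation: for any events $S,T$ and $2$-monotone $\lp$, if $\lp(S\cap T)\le\lp(S)\lp(T)$ then $\up(S^c\cap T^c)\le\up(S^c)\up(T^c)$, and the conclusion is strict when the hypothesis is. This is immediate from conjugacy and $2$-monotonicity:
\[
\up(S^c\cap T^c)=1-\lp(S\cup T)\le 1-\bigl[\lp(S)+\lp(T)-\lp(S\cap T)\bigr]\le\bigl(1-\lp(S)\bigr)\bigl(1-\lp(T)\bigr)=\up(S^c)\up(T^c),
\]
where the last step invokes the hypothesis and preserves strictness. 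The dual version — $\up(S\cap T)\ge\up(S)\up(T)$ implies $\lp(S^c\cap T^c)\ge\lp(S^c)\lp(T^c)$ — follows identically from the $2$-alternating inequality for $\up$.

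Next I would apply this lemma to each atom-pair. Since $(S,T)\mapsto(S^c,T^c)$ is a fixed-point-free involution of $\{A,A^c\}\times\{B,B^c\}$, the four Geometric-dilation inequalities are carried exactly onto the four Dempster-contraction inequalities: the hypothesis at $(A^c,B^c)$ yields $\up(A\cap B)\le\up(A)\up(B)$, the hypothesis at $(A,B)$ yields $\up(A^c\cap B^c)\le\up(A^c)\up(B^c)$, and the two mixed pairs supply the remaining two. Hence Geometric dilation implies Dempster contraction, and the dual lemma gives that Dempster dilation implies Geometric contraction.

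What remains, and where the care lies, is the strict-versus-equality bookkeeping, because each outer inequality in the dilation/contraction definitions is a $\max$ or $\min$ over $\mathcal{B}$ that bundles two of the four atomwise product inequalities. I would record that strict dilation is equivalent to all four product inequalities being strict, which by the lemma forces all four contraction inequalities strict, i.e.\ strict contraction. For non-strict dilation, exactly one of the two outer inequalities is an equality; tracing the four mappings shows that the two product inequalities feeding the \emph{strict} outer inequality stay strict, producing one fully strict side of the contraction (the posterior interval lies strictly inside the prior on that side), while the other side is weak — precisely a non-strict contraction, with no possibility of both sides collapsing to equality. The main obstacle is thus not the analytic content (the lemma is two lines) but pinning down this correspondence between the $\max$/$\min$ outer inequalities and the four atomwise products exactly, and confirming well-definedness of the conditionals so that clearing denominators is legitimate.
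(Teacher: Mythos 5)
Your proof is correct, and every step checks out against the definitions in the paper: the eight conditional inequalities do unwind, via conjugacy, into the product inequalities you state (e.g.\ $\upg(A\mid Z)\ge\up(A)$ is exactly $\lp(A^{c}\cap Z)\le\lp(A^{c})\lp(Z)$, and $\lpd(A\mid Z)\ge\lp(A)$ is exactly $\up(A^{c}\cap Z)\le\up(A^{c})\up(Z)$); your complementation lemma is a valid two-line consequence of conjugacy plus $2$-monotonicity, with strictness propagating as claimed; and your strictness bookkeeping, including the observation that the strict side of a non-strict dilation feeds exactly the two product inequalities that become the strict side of the contraction (so the result cannot collapse to both-sides-equality), matches what the theorem requires. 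The paper reaches the same conclusion from the same three ingredients (conjugacy, the order-$2$ inequality, and the dilation hypothesis) but organizes them as four separate ratio computations: it bounds $\upd(A\mid B)/\up(A)<1$ and $\lpd(A\mid B)/\lp(A)>1$ by chaining the $2$-alternating/$2$-monotone inequalities on $(A,B)$ with the Geometric dilation inequalities at $Z=B^{c}$ and $Z=B$ respectively, and then repeats mirror-image chains for the second half of the statement. Your decomposition is genuinely different in organization, and cleaner: a single lemma ($\lp(S\cap T)\le\lp(S)\lp(T)\Rightarrow\up(S^{c}\cap T^{c})\le\up(S^{c})\up(T^{c})$, together with its dual), applied along the fixed-point-free involution $(S,T)\mapsto(S^{c},T^{c})$ of the four atom pairs, delivers both halves of the theorem at once and makes the mechanism of the counteraction explicit --- $2$-monotonicity is precisely what transports one rule's dilation inequalities, through complementation, onto the other rule's contraction inequalities --- something the paper's ratio chains leave implicit. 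The only cost is the hypothesis you already flag, $\lp(B)>0$ and $\lp(B^{c})>0$ so that all eight conditionals are defined and denominators may be cleared, which the paper assumes implicitly as well.
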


\begin{proof}
If $\mathcal{B}$ strictly dilates $A$ under the Geometric rule, then for either $Z\in\mathcal{B}$
\begin{eqnarray}
\lpg\left(A\mid Z \right) &=& \frac{\lp \left(A\cap Z\right)}{\lp \left(Z\right)} < \lp\left(A\right) \label{eq:geom-lower-dilate}, \\
\upg\left(A\mid Z\right) &=& \frac{\up \left(A\cup Z^{c}\right)-\up \left(Z^{c}\right)}{\lp \left(Z\right)} > \up \left(A\right). \label{eq:geom-upper-dilate}
\end{eqnarray}
It follows then  
\begin{eqnarray*}
	\frac{\upd\left(A\mid B\right)}{\up\left(A\right)}
 & = & \frac{\up\left(A\cap B\right)}{\up \left(A\right)\cdot\up\left(B\right)} = \frac{\up\left(A\cap B\right)}{\up \left(A\right)\cdot\left(1-\lp\left(B^{c}\right)\right)}\\
 & < & \frac{\up\left(A\cap B\right)}{\up\left(A\right)\cdot\left[1-\left(\up\left(A\cup B\right)-\up\left(B\right)\right)/\up\left(A\right)\right]}\\
 & = & \frac{\up\left(A\cap B\right)}{\up\left(A\right)+\up\left(B\right)-\up\left(A\cup B\right)} \le 1,
\end{eqnarray*}
where the first inequality follows from (\ref{eq:geom-upper-dilate}) with $Z=B^c$, and the second inequality is based on the $2$-alternating nature of $\up$. (The $2$-alternating nature was also implicitly used in the first inequality to ensure $\up\left(A\cup B\right)-\up\left(B\right) < \up\left(A\right)$, hence the positivity of the denominator after replacing $\lp(B^c)$ with an upper bound.) In a similar vein,
\begin{eqnarray*}
\frac{\lpd \left(A\mid B\right)}{\lp \left(A\right)} & = & \frac{\up\left(B\right)-\up \left(A^{c}\cap B\right)}{\up \left(B\right) \cdot \lp \left(A\right)} = \frac{\lp \left(A\cup B^{c}\right)-\lp \left(B^{c}\right)}{\up \left(B\right) \cdot \lp \left(A\right)}\\
 & \ge & \frac{\lp \left(A\right)-\lp \left(A\cap B^{c}\right)}{\left(1-\lp \left(B^{c}\right)\right)\cdot\lp \left(A\right)} \\
 & = &  \frac{\lp \left(A\right)-\lp \left(A\cap B^{c}\right)}{\lp \left(A\right)-\lp \left(B^{c}\right)\cdot\lp \left(A\right)}>1,
\end{eqnarray*}
where the first inequality uses the $2$-monotone nature of $\lp$ and the second inequality is based on (\ref{eq:geom-lower-dilate}) with $Z=B$. Thus we have $\upd\left(A\mid B\right) < \up\left(A\right)$ and $\lpd\left(A\mid B\right) > \up\left(A\right)$, and clearly both inequalities still hold when we replace $B$ by $B^c$ because 
(\ref{eq:geom-lower-dilate})-(\ref{eq:geom-upper-dilate}) hold for both $Z=B$ and $Z=B^c$,  hence $\mathcal{B}$ strictly contracts $A$ under Dempster's rule. If $\mathcal{B}$ dilates $A$ under the Geometric rule but not strictly, the inequality in either  (\ref{eq:geom-lower-dilate}) or (\ref{eq:geom-upper-dilate}), but not both, may hold with equality, hence $\mathcal{B}$ contracts $A$ under Dempster's rule but not strictly. This completes the proof for the first half of the statement. 

For the second half, when $\mathcal{B}$ strictly dilates $A$ under Dempster's rule, we have for any $Z\in\mathcal{B}$,
\begin{eqnarray*}
\lpd (A\mid Z) & = & \frac{\up \left(A\cap Z\right)}{\up \left(Z\right)}  > \up \left(A\right), \\
\upd (A\mid Z)  & = & \frac{\lp \left(A\cup Z^{c}\right)-\lp \left(Z^{c}\right)}{\up \left(Z\right)} < \lp \left(A\right).
\end{eqnarray*}
Noting both inequalities hold for $Z$ and $Z^c$, we have
\begin{equation*}
	1 > \frac{\lp \left(A\cup Z \right)-\lp \left(Z\right)}{\lp \left(A\right)\cdot\up \left(Z^{c}\right)} 
 \ge 	\frac{\lp \left(A\right)-\lp \left(A\cap Z\right)}{\lp \left(A\right)-\lp \left(A\right)\cdot\lp \left(Z\right)}.
\end{equation*}
Hence 
$ \lp \left(A\right) < \lp \left(A\cap Z\right) / \lp \left(Z\right) = \lpg (A\mid Z)$.
On the other hand,
\begin{equation*}
1  < \frac{\up \left(A\cap Z^{c}\right)}{\up \left(A\right)\cdot\up \left(Z^{c}\right)} 
   \le  \frac{\up \left(A\right)-\left(\up \left(A\cup Z^{c}\right)-\up \left(Z^{c}\right)\right)}{\up \left(A\right)-\up \left(A\right)\cdot\lp \left(Z\right)}. 
\end{equation*}
Hence $\up \left(A\right)  >  \left( \up \left(A\cup Z^{c}\right)-\up \left(Z^{c}\right) \right) / \lp \left(Z\right) =\upg \left(A\mid Z\right)$.
The same argument applies that if $\mathcal{B}$ dilates $A$ under Dempster's rule but not strictly, it contracts $A$ under the Geometric rule but not strictly. This completes the proof for the second half of the statement. 
\end{proof}

\subsection{Visualizing Relationships and Complications}\label{subsec:poll}

\begin{example}{5}[\textit{Pre-election poll}]
\label{example:election}	

Suppose that we intend to study the voter intention prior to the 2016 US election. For simplicity, assume there are only two parties, represented respectively by Clinton and Trump, with one to be elected. The pre-election poll consists of two questions:
\begin{enumerate}
\item Do you intend to vote for Trump or Clinton?
\item Do you identify more as a Republican or a Democrat?
\end{enumerate}

Among all surveyed individuals, some answered both questions, some only one, and the rest did not respond. Let $Q_1 = \{\text{Trump}, \text{Clinton}\}$ denote votes for Trump and Clinton, respectively, and $Q_2 = \{\text{Republican}, \text{Democrat}\}$ denote identification with the Republican and Democratic parties, respectively. If all the percentages of response patterns are fully known, this model can be represented as a belief function. Assume the mass function $m\left(\cdot\right)$ reflecting the coarsened sampling distribution for these set-valued observations appears as Table~\ref{table:poll} (of course, the numbers are for illustrations only):

\begin{table}[H]
\caption{\label{table:poll} Hypothetical data from a voter poll consisting of two questions}
\small
\def\arraystretch{1.5}
\begin{tabular}{|c|c|c|c|c|c|c|c|c|c|}
\hline 
$Q_{1}$ & C & T & C & T & C & T & (n/a) & (n/a) & (n/a)\tabularnewline
\hline 
$Q_{2}$ & Dem & Dem & Rep & Rep & (n/a) & (n/a) & Dem & Rep & (n/a)\tabularnewline
\hline
\hline 
$m(\cdot)$ & \multicolumn{8}{c|}{$0.1 - \epsilon$} & $0.2 + 8\epsilon$ \tabularnewline
\hline 
\end{tabular}
\normalsize
\end{table}

A ``tuning parameter'' $\epsilon\in\left[-0.025,0.1\right]$ is installed to create a family of mass function specifications in order to investigate the differential behavior among updating rules as a function of the coarseness of the data. The smaller the $\epsilon$, the more the mass function concentrates on the precise observations (more survey questions answered). The larger the $\epsilon$, the closer the random set approaches the vacuous belief function. As a function of $\epsilon$, the prior lower and upper probabilities for Clinton are
\begin{equation*}
	\lp \left( \text{C} \right) = 0.3 - 3\epsilon \, , \qquad \up \left( \text{C} \right) = 0.7 + 3\epsilon.
\end{equation*}
The prior lower and upper probabilities for Trump, as well as for identification of either parties are numerically identical to the above, since the setup is fully symmetric with respect to both voting intention and partisanship. For example, when $\epsilon=0$, the table above shows that altogether 40\% of the respondents diligently answered both questions, 20\% only identified prior partisanship, 20\% only expressed current voting intentions, and another 20\% did not respond at all. Thus, $m\left(\cdot\right)$ determines a pair of belief and plausibility functions which bounds  the vote share for both Clinton and Trump to be within $30\%$ and $70\%$. 

How will information on partisanship affect the knowledge on voting intention? According to the three updating rules, the lower and upper probabilities for Clinton conditional on either values of partisanship $Q_2$, are as follows:
\begin{eqnarray*}
\lpb \left( \text{C} \mid Q_2 \right) = \frac{0.1-\epsilon}{0.6+4\epsilon}, & \qquad & \upb \left( \text{C} \mid Q_2 \right) = \frac{0.5+5\epsilon}{0.6+4\epsilon},  \\
\lpd \left( \text{C} \mid Q_2 \right) = \frac{0.2-2\epsilon}{0.7+3\epsilon},
  & \qquad & \upd \left( \text{C} \mid Q_2 \right) = \frac{0.5+5\epsilon}{0.7+3\epsilon}, \\
\lpg \left( \text{C} \mid Q_2 \right) = \frac{1}{3}, &  \qquad & \upg \left( \text{C} \mid Q_2 \right) = \frac{2}{3}.
\end{eqnarray*}
\end{example}

\begin{figure}
\noindent \begin{centering}
\includegraphics[width=\textwidth]{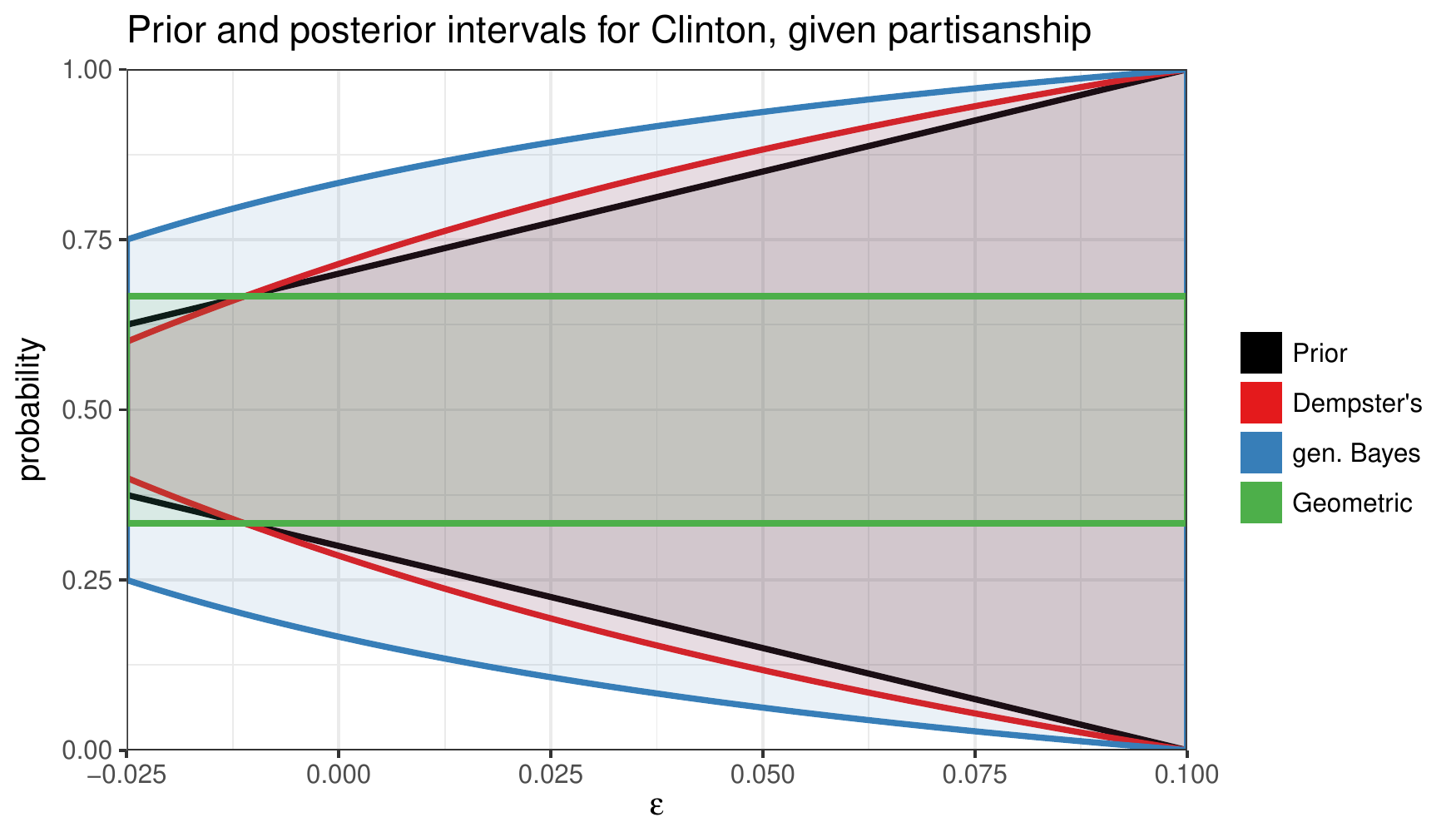}
\par\end{centering}
\caption{\label{fig:prob_A_epsilon} Prior probability interval for Clinton's voter support (black) and posterior probability intervals given reported partisanship according to the three updating rules (blue: generalized Bayes, red: Dempster's, green: Geometric). Due to full symmetry of the setup, {contraction} happens under an updating rule whenever the corresponding posterior interval depicted is contained within the prior interval; vice versa for {dilation}.}
\end{figure}

See Figure \ref{fig:prob_A_epsilon} for the above quantities as functions of $\epsilon$. We observe that
\begin{itemize}
	\item Under {\it the generalized Bayes rule}, knowledge about partisanship strictly dilates voting intention for either candidate for all $\epsilon < 0.1$. That is to say, learning the prior partisanship of an individual dilates our inference of her current voting intention, and vice versa, and this is true no matter which party or candidate is said to be favored;
	\item Under {\it Dempster's rule}, partisanship strictly dilates voting intention for either candidate for $-0.011 < \epsilon < 0.1$, and strictly contracts both for $-0.025 < \epsilon < -0.011$;
	\item Under {\it the Geometric rule}, partisanship strictly dilates voting intention for either candidate for $-0.025 < \epsilon < -0.011$, and strictly contracts both for $-0.011 < \epsilon <0.1$. Moreover, the absolute value of the lower and upper posterior probability remained constant regardless of the value of $\epsilon$.
	\end{itemize}

Furthermore, we observe some of the phenomena discussed previously in this section. For example, the extent of dilation exhibited by the generalized Bayes rule is to a strictly larger extent than that of both Dempster's rule and the Geometric rule, if either of them does dilate. The dilation-contraction status of Dempster's rule and the Geometric rule are in full opposition to each other, switching precisely at $\epsilon = -0.011$.

\section{Food For Thoughts}\label{sec:discussion}

\subsection{Assumption incommensurability and conditioning protocol}\label{subsec:verdict}

As revealed in Section~\ref{subsec:unmeasurable}, each imprecise probability updating rule is constantly faced with the problem that the conditioning information may not be measurable with respect to the very imprecise probability it is trying to update. As a consequence, they each effectively build within themselves a  mechanism for imposing mathematical restrictions generated by a given event $B$. 
This is why, as far as we can see, the situation in the world of imprecise probability is more confusing and clearer at the same time. It is more confusing because the notations $\lps$ and $\ups$ carry meanings contingent upon the $\bullet$-rule we choose. Yet, different rules are built upon different mechanisms for imposing the mathematical restriction specified by an event partition $\mathcal{B}$, in a much similar vein to the sampling and missing data mechanisms mentioned previously, potentially supplying a variety of options suitable for different situations that users may choose from, as long as they are well-informed of the implied assumptions of each rule. In this sense the situation is clearer, because the imprecise nature should compel the users to be explicit about the imposed mechanisms in order to proceed. Below we illustrate this point.

%
%


\begin{example}{\ref{example:boxer} cont.}[\textit{The boxer, the wrestler, and the God's coin}]
Recall the boxer and wrestler example in which there exists  {\it a priori}, a fair coin and vacuous knowledge of the two fighters. Our analysis in Section~\ref{sec:updates} showed that upon knowing $X=Y$, Dempster's rule will judge the posterior probability of boxer's win to be precisely half, whereas generalized Bayes rule will remain that the chance is anywhere within $[0, 1]$. We realize that the witness who relayed the message $X=Y$ could have meant it in (at least) two different ways:
\begin{enumerate}
	\item that he happened to see both the coin flip and the match between the two fighters, and the results of the two events were identical;
	\item that he, somehow miraculously, knew that the coin toss \emph{decides} the outcome of the match, as if the coin is God's pseudorandom number generator.
\end{enumerate}

If the first meaning is taken, as most of us naturally do, it seems that the generalized Bayes answer makes sense. After all, since we do not know the relationship between two co-observed phenomenon, the worst case scenario would be to admit all possibilities, including the most extreme forms of dependence, when deriving the probability interval. 

However, if the head of the coin dictates the triumph of the boxer, and the former event is known precisely as a toss-up, it makes sense to think of the match as a true toss-up as well. In this case, it is rightful to call for a transferral of the  {\it a priori}  precise probability of $X$ onto the  {\it a priori}  vacuous $Y$. The same logic would apply had we been told $X\neq Y$, in the sense that the head of the coin dictates the triumph of the wrestler. In both cases, the update is akin to adding another piece of structural knowledge to the model itself. 
\end{example}

This example reflects well a point made by \cite{shafer1985conditional}. In order for probabilistic conditioning to be properly interpreted, it is crucial to have a ``protocol'' specifying what information {\it can} be learned, in addition to learning the actual information itself. Updating in absence of a protocol, or worse, under an unacknowledged, implicit protocol can produce dangerous complications to the interpretation of the output inference. Dilation and sure loss, phenomena exclusive to imprecise probability, are striking instances that demonstrate such danger. The difference among the three imprecise model updating rules discussed in this paper is precisely the way the same incoming message might be interpreted. Each conditioning rule effectively creates a world of alternative possible observations, hence a protocol is de-facto in place, only hidden behind these explicit-looking rules. 

When performing updates in the boxer and wrestler's case, the distinction between conditioning protocols underlying the solutions we have offered so far is one between {\it factual} versus {\it incidental} knowledge spaces.  Knowing $X = Y$ is a possible outcome and by chance observing it constitutes incidental knowledge. Knowing that $X = Y$ is the factual state of the nature is knowledge of a fundamentally different type, one that is much more restrictive and powerful at the same time: in other words, $X\not=Y$ cannot, could not, and will not happen. Unlike their incidental counterparts, claiming either $X = Y$ or $X \neq Y$ as factual necessarily makes them incommensurable with one another, even over sampling repetitions. That is to say, if either $X = Y$ or $X \neq Y$ are to be hard-coded into the model, they will each result in a model distinct from the other in a way that their respective posterior judgments about the same event, say $Y = 1$, are not meant to enter the same law of total probability calculation. If we are willing to admit either $X = Y$ or $X \neq Y$ as factual evidence to condition on, they can no longer be regarded as a partition of the full space like they did back in Section~\ref{subsec:dilation}; the model must also anticipate to deal with a whole range of other possible relationships between $X$ and $Y$ that are non-deterministic, as part of the conditioning protocol in \citeauthor{shafer1985conditional}'s sense.

The distinction drawn here between the judgments of factual knowledge versus incidental knowledge are referred to as {\it revision} versus {\it focusing} in the imprecise probability literature; see \cite{smets1991updating} for a discussion on the matter and its reflected ideologies of update rules. 
Whether a rule is applicable to a particular imprecise model would consequently depend on the type of knowledge we have and what questions we want to answer.


Within a precise modeling framework, the knowledge type for conditioning can typically be coded into the conditioning event itself, which might be on an enhanced probabilistic space but without increasing  the resolution of the original (marginal) model because it is already at the highest possible resolution.  Hence one universal updating rule is sufficient. Under an imprecise model, such a resolution-preserving  encoding may not be possible because of the low resolution nature of the original model. Various rules then were/are invented to carry out the update as a qualitative rescue for the model's inability to quantify the knowledge types within its original resolution. This makes the judgment of knowledge types particularly pronounced, which serves well as a loud reminder of the central and precise nature of conditioning in statistical learning. But this also increases vulnerability and confusion when we do not explicate the applicability and subtitles of each updating rule, leading to seemingly paradoxical phenomena studied in this paper.

\subsection{Imprecise probability, precise decisions}\label{subsec:decision}

Seeing a myriad of sensible and non-sensible answers produced by the updating rules of imprecise models, one may wonder if anything certain, or close to certain, can be inferred from these models at all without stirring up a controversy. To this end, we discuss a final twist to the three prisoners' story.  

\begin{example}{\ref{example:prisoner} cont.}[\textit{Three prisoners' Monty Hall}]
	Having heard from the guard that $B$ will not receive parole, prisoner $A$ is presented with an option to switch his identity with prisoner $C$: that is, the next morning $A$ will be met with the fate of $C$ (and $C$ that of $A$), both having been decided unbeknownst to them. Is this a good idea for $A$?
\end{example}
The answer is unequivocally yes. The above is a recast of the Monty Hall problem in which you, the contestant standing in front of a randomly chosen door (prisoner $A$), have just been shown a door with a goat behind it (``$B$ will be executed''), and are contemplating a switch to the other unopened door (the identity of prisoner $C$) for a better chance of winning the new car (parole). By the calculations in (\ref{eq:prisoner-a-given-b}), we know that under the generalized Bayes rule
\begin{equation*}
\upb\left(A\text{ lives}\mid\text{guard says }B\right) = \lpb\left(C\text{ lives}\mid\text{guard says }B\right) 
\end{equation*}
suggesting that a switch will under no circumstance hurt the chance of $A$'s survival, because without switching, $A$'s best chance of surviving does not exceed $C$'s worst chance of living. Moreover, as the most conservative rule of all, the (almost) separation of the two posterior probability intervals from the generalized Bayes rule guarantees the same for the other updating rules as well. As far as $A$ is concerned, the action of identity switching can be recommended without reservation, regardless of the choice of rule among the three discussed. This possibility is due to the (very) low resolution nature of the action space, often binary (e.g., switching or not), allowing different high-resolution probabilistic statements to admit the same low resolution classification in the action space.

\subsection{Beyond associations: extended types of information contribution in imprecise models}

As discussed at the beginning of Section~\ref{sec:intro}, in precise probability models, association is the fundamental means through which observed information contribute to the model, as it characterizes how probabilistic information should change with one after the other has been learned, and vice versa. The sign of the association gives the sense of direction, such as seen from the coefficients in regression models. The magnitude of the association implies an order of priority, such as in large scale genome-wide association studies and elsewhere where correlation coefficients are used as test statistics. Plentiful association is the indication of signal strength, potential discovery, and the prospect of a causal relationship. The absence of association, on the other hand, is just as desirable when used to justify independence assumptions, creating a blanket of simplicity on which small-world models can be built and trusted. The three types of associations (positive, negative, and independence) are in one-to-one correspondence with the three possible directions of change as the probability of an event updates from the prior to the posterior according to the Bayes rule. In precise probabilities, these three types of associations exhaust all possibilities of information contribution from one event to another.

Imprecise probabilities complicate the landscape of information contribution fundamentally because the probabilistic description assigned to each event is no longer singular. For coherent upper and lower probabilities, this description is now a closed interval $[\lp(A), \up(A)]$ of possibly non-negligible width. As a consequence, characterizations of the direction of change from prior to posterior becomes a complicated topic. Generalized versions of the classical notion of association and independence are yet to be defined for sets of probabilities. Furthermore, novel phenomena like dilation, contraction and sure loss as explored in this paper are hinting at extended types of information contribution waiting to be harnessed. As imprecise probability updating is contingent upon the choice of rules, can we use some of these rules, capable of an array of novel behavior such as Dempster's rule, to characterize a generalized set of definitions of information contribution in imprecise probabilities?

\section*{Acknowledgements}

We thank Arthur Dempster, Keli Liu, Glenn Shafer and Teddy Seidenfeld for very helpful discussions and comments, and Steve Finch for proofreading.

\bibliographystyle{apacite}
\bibliography{main}

\end{document}